\g@addto@macro\normalsize{%
  \setlength\abovedisplayskip{8pt plus 3pt minus 3pt}
  \setlength\belowdisplayskip{8pt plus 3pt minus 3pt}
  \setlength\abovedisplayshortskip{6pt plus 3pt minus 2pt}
  \setlength\belowdisplayshortskip{6pt plus 3pt minus 2pt}
}
\date{}
\numberwithin{equation}{section}
\newcommand\calL{\mathcal{L}}
\renewcommand\dfrac[2]{\lower0.15ex\hbox{\large$\frac{#1}{#2}$}}
\newcommand\nicebreak{\vskip 0pt plus 50pt\penalty-300\vskip 0pt plus -50pt }
\renewcommand\dfrac[2]{\lower0.18ex\hbox{\Large$\frac{#1}{#2}$}}
\newcommand\eps{\varepsilon}
\newcommand\abs[1]{\mathopen|#1\mathclose|}
\renewcommand\({\bigl(}
\renewcommand\){\bigr)}
\newcommand\red[1]{#1}
\newtheorem{thm}{Theorem}[section]
\newtheorem{lemma}[thm]{Lemma}
\newtheorem{conj}[thm]{Conjecture}
\def\dfrac#1#2{\lower0.15ex\hbox{\large$\textstyle\frac{#1}{#2}$}}
\def\({\bigl(}
\def\){\bigr)}
\def\({\bigl(}
\def\){\bigr)}
\def\E{\mathbb{E}}
\def\nicebreak{\vskip 0pt plus 50pt\penalty-300\vskip 0pt plus -50pt }
\let\originalleft\left
\let\originalright\right
\renewcommand{\left}{\mathopen{}\mathclose\bgroup\originalleft}
\renewcommand{\right}{\aftergroup\egroup\originalright}
\begin{document}

\title{Factorisation of the complete bipartite graph into spanning semiregular factors}

\author{
Mahdieh Hasheminezhad\\
\small Department of Computer Science\\[-0.8ex]
\small Yazd University\\[-0.8ex]
\small Yazd, Iran\\
\small\tt hasheminezhad@yazd.ac.ir
\and
Brendan D. McKay\\
\small School of Computing\\[-0.8ex]
\small Australian National University\\[-0.8ex]
\small Canberra, ACT 2601, Australia\\
\small\tt brendan.mckay@anu.edu.au
}

\maketitle

\begin{abstract}
We enumerate factorisations of the complete bipartite graph into spanning
semi\-regular graphs in several cases, including when the degrees of all the factors
except  one or two are small.  The resulting asymptotic behaviour is 
seen to generalise the number of semiregular graphs in an elegant way.
This leads us to conjecture a general formula when the number of
factors is vanishing compared to the number of vertices.
As a corollary, we find the average number of ways to partition the edges
of a random semiregular bipartite graph into spanning semiregular subgraphs
in several cases.
Our proof of one case uses a switching argument to find the probability
that a set of sufficiently sparse semiregular bipartite graphs are
edge-disjoint when randomly labelled.
\end{abstract}

\nicebreak

\section{Introduction}

A classical problem in enumerative graph theory is the asymptotic
number of 0-1 matrices with uniform row and column sums;
equivalently, semiregular bipartite graphs.

We will consider all bipartite graphs to have bipartition $(V_1,V_2)$
where $\abs{V_1} = m$ and $\abs{V_2}=n$.
An \textit{$(m,n,\lambda)$-semiregular} bipartite graph has every degree
in $V_1$ equal to~$\lambda n$ and every degree in $V_2$ equal to~$\lambda m$.
Of course, for such a graph to exist we must have $0\le\lambda\le 1$
and $\lambda m, \lambda n$ must be integers.  We
will tacitly assume that these elementary conditions hold throughout
the paper for every mentioned semiregular graph.
The parameter $\lambda$ will be called the \textit{density}.
Define $R_\lambda(m,n)$
to be the number of $(m,n,\lambda)$-semiregular bipartite graphs.
The asymptotic determination of $R_\lambda(m,n)$ as $n\to\infty$
with $m=m(n)$ and $\lambda=\lambda(n)$ is not yet complete, but
the known values fit a simple formula.

\begin{thm}\label{thm:reg}
   Let $n\to\infty$ with $m\le n$.  Then
   \[
       R_\lambda(m,n) \sim
       \frac{\displaystyle\binom{n}{\lambda n}^{\!m} \binom{m}{\lambda m}^{\!n}}
              {\displaystyle\binom{mn}{\lambda mn}}
       \, (1-1/m)^{(m-1)/2}
  \]
  in the following cases.
  \begin{itemize}\itemsep=0pt
      \item[1.] $\lambda = o\((mn)^{-1/4}\)$.
      \item[2.] For sufficiently small $\eps>0$ ,
           $(1-2\lambda)^2\(1+\dfrac{5m}{6n}+\dfrac{5n}{6m}\)
                      \le(4-\eps)\lambda(1-\lambda)\log n$
           and $n=o\(\lambda(1-\lambda) m^{1+\eps}\)$.
       \item[3.] For some $\eps>0$,
           $2\le m = O\( (\lambda(1-\lambda)n)^{1/2-\eps}\)$.
       \item[4.] $\lambda\le c$ for a  sufficiently small constant~$c$, 
          $n=O(\lambda^{1/2-\eps}m^{3/2-\eps})$ for some $\eps >0$,
          and $\lambda m\ge \log^K n$ for every $K$. 
  \end{itemize}
\end{thm}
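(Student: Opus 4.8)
The displayed formula is a synthesis of several independently-proved enumeration results whose ranges of validity overlap, so my plan is to establish each of the four cases by the technique natural to its regime and then, in every case, to verify by Stirling's formula that the asymptotic expression it produces collapses to the single closed form. The verification is where the care is needed. After cancelling the common ``entropy'' contribution $\exp(mnH(\lambda))$, with $H(x)=-x\log x-(1-x)\log(1-x)$, between $\binom{n}{\lambda n}^{m}\binom{m}{\lambda m}^{n}$ and $\binom{mn}{\lambda mn}$, what remains of the right-hand side is $m^{1/2}(2\pi\lambda(1-\lambda)n)^{-(m-1)/2}\binom{m}{\lambda m}^{n}(1-1/m)^{(m-1)/2}$, and one must show that each source result, stated with its own error term and its own product of square-root factors, matches this and has error $o(1)$ throughout the quoted range. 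Tracking where the innocuous-looking factor $(1-1/m)^{(m-1)/2}$ comes from in each method is routine but is the part of the write-up most prone to slips.

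Case~1, the very sparse range $\lambda=o((mn)^{-1/4})$, I would handle through the configuration model together with a switching argument: realise the semiregular bipartite graphs as configurations (perfect matchings between the degree-cells of the two sides), estimate the expected number of repeated edges, and use edge-swaps to pass from the configuration count to the simple-graph count; the second-order terms of this analysis combine with the binomial coefficients to produce the correction factor. (Equivalently the same answer can be read off the classical permanent/generating-function asymptotics for sparse 0-1 matrices.) For Case~3, where $m$ grows no faster than $(\lambda(1-\lambda)n)^{1/2-\eps}$, I would transpose so that the short side has length $m$ and write $R_\lambda(m,n)$ as the number of ways to give each of the $n$ long-side vertices one of the $\binom{m}{\lambda m}$ admissible neighbourhoods, conditioned on the $m$ short-side degrees all equalling $\lambda n$; a local central limit theorem in dimension $m-1$ for this column-degree vector then yields the formula, the factor $(1-1/m)^{(m-1)/2}$ emerging jointly from the determinant of the summand covariance matrix (whose off-diagonal entries are the constant $-\lambda(1-\lambda)/(m-1)$) and the covolume of the lattice supporting the vector. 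The hypothesis $m=O((\lambda(1-\lambda)n)^{1/2-\eps})$ is exactly what makes the local-limit error negligible.

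Cases~2 and~4 are the substantive ones. Case~2 is the dense regime, and the hypotheses $(1-2\lambda)^2\(1+\tfrac{5m}{6n}+\tfrac{5n}{6m}\)\le(4-\eps)\lambda(1-\lambda)\log n$ and $n=o(\lambda(1-\lambda)m^{1+\eps})$ delimit precisely the range in which the complex-analytic (multidimensional saddle-point) method works: one writes $R_\lambda(m,n)$ as an $(m+n)$-fold contour integral over a product of circles $\abs{z_i}=r_i$, locates the balanced saddle point, and splits the torus into a central box on which a Gaussian approximation of dimension $m+n-1$ is valid --- producing the ratio of binomials together with the determinant that accounts for the correction --- and a complementary region whose contribution is $O(n^{-\eps})$ precisely by virtue of the logarithmic condition. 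Case~4 ($\lambda$ bounded by a small constant, $n=O(\lambda^{1/2-\eps}m^{3/2-\eps})$, and $\lambda m\ge\log^K n$ for every $K$) calls for switchings again, now with growing degrees: one relates $R_\lambda(m,n)$ to the count for a slightly perturbed degree sequence, or to a configuration count, through a switching operation, controls the resulting ratio by proving sharp concentration for the number of the obstructing substructures, and telescopes. I expect Case~4 to be the principal obstacle, because keeping the switching analysis under control while $\lambda m$ is allowed to be only polylogarithmic forces quite precise tail bounds on small-subgraph counts --- this is the role of the hypothesis $\lambda m\ge\log^K n$ --- after which reconciling the output with the closed form via Stirling is comparatively painless.
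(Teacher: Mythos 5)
There is a genuine gap here, and it is one of scale rather than of direction. In the paper this theorem is not proved from scratch at all: it is a survey statement, and the ``proof'' consists of citing the four sources --- Case~1 is McKay--Wang, Case~2 is Greenhill--McKay, Case~3 is Canfield--McKay, and Case~4 is Liebenau--Wormald --- together with the observation that $(1-1/m)^{(m-1)/2}\to e^{-1/2}$ when $m\to\infty$, which is what reconciles the single displayed formula with the forms in which those papers state their results. Each of the four cases is itself a full-length research paper. Your proposal gestures at roughly the right methods for each regime (switchings in the sparse range, a multidimensional saddle point over a torus in the dense range, a lattice local limit theorem when $m$ is small, and a switching/concentration analysis at polylogarithmic degrees), but a strategy paragraph per case is not a proof: you never carry out the saddle-point tail estimate that is the heart of Case~2 (showing the contribution of the torus away from the central box is negligible is exactly where the hypothesis $(1-2\lambda)^2(1+\frac{5m}{6n}+\frac{5n}{6m})\le(4-\eps)\lambda(1-\lambda)\log n$ enters, and verifying this is the bulk of Greenhill--McKay); for Case~3 you invoke a local CLT in dimension $m-1$, but $m$ is allowed to grow like $(\lambda(1-\lambda)n)^{1/2-\eps}$, and standard local limit theorems (including the one the paper itself uses, which it applies only when $m=O(1)$) are not uniform in a growing dimension --- making that uniform is precisely the content of Canfield--McKay; and for Case~4 ``telescoping switchings with sharp concentration'' is a placeholder for essentially all of Liebenau--Wormald's machinery, which is not a routine extension of sparse switching arguments.

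Two smaller slips: in Case~3 you propose to ``transpose so that the short side has length $m$,'' but $m\le n$ is already assumed, so this step is vacuous; and your claimed simplification of the right-hand side after cancelling the entropy term is asserted rather than derived, whereas pinning down the second-order constants (the source of the factor $(1-1/m)^{(m-1)/2}$, equivalently $e^{-1/2}$ for growing $m$) is exactly the delicate bookkeeping you acknowledge but do not do. If the intent is to match the paper, the correct move is simply to quote the four known theorems and check that their stated formulas coincide with the displayed expression in each parameter range; if the intent is to reprove them, the proposal as written is an outline of four separate research programmes, not a proof.
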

\begin{proof}
    Note that $(1-1/m)^{(m-1)/2}\to e^{-1/2}$ if $m\to\infty$.
   Case~1 covers the sparse case and was proved by McKay and Wang~\cite{MW}.
   Case~2 applies when $m$ and $n$ are not very much different and the
   graph is very dense, while Case~3 covers all densities when $n$ is much
   larger than~$m$.
   Case~2 was proved by Greenhill and McKay~\cite{GMX},
   and Case~3 by Canfield and McKay~\cite{CM}.
   Case~4, proved by Liebenau and Wormald~\cite{LW}, covers a wide range
   of densities and moderately large~$n/m$.
 \end{proof}
  
  \red{Theorem~\ref{thm:reg} does} not cover all $(m,n,\lambda)$.
   For example $\lambda=\frac12,m=n^{2/3}$ is missing,
   and so is $\lambda=\frac 1m, m=n^{1/2}$.
   However, based on numerical evidence a strong form of the
   theorem was conjectured in~\cite{CM} to hold for all $(m,n,\lambda)$.

We can consider $R_\lambda(m,n)$ \red{as counting} the partitions of the edges of~$K_{m,n}$
into two spanning semiregular subgraphs, one of density $\lambda$ and one of
density $1-\lambda$.
This suggests a generalization: how many ways are there to partition the edges
of~$K_{m,n}$ into more than two spanning semiregular subgraphs, of specified densities?

For positive numbers $\lambda_0,\ldots,\lambda_k$ with sum~1,
define $R(m,n; \lambda_0,\ldots,\lambda_k)$ to be the number of ways to partition
the edges of
$K_{m,n}$ into spanning semiregular subgraphs of density $\lambda_0,\ldots,\lambda_k$.
We conjecture that for $k=o(m)$, the asymptotic answer is a simple
generalisation of Theorem~\ref{thm:reg}.

\begin{conj}\label{conj:main}
  Let $\lambda_0,\ldots,\lambda_k$ be positive numbers such that
  $\sum_{i=0}^k\lambda_i=1$.
  Then, if $n\to\infty$ with $2\le m\le n$, and $1\le k=o(m)$,
  $R(m,n; \lambda_0,\ldots,\lambda_k)\sim R'(m,n; \lambda_0,\ldots,\lambda_k)$,
  where (using multinomial coefficients)
   \[
     R'(m,n; \lambda_0,\ldots,\lambda_k) =  
       \frac{\displaystyle\binom{n}{\lambda_0 n,\ldots,\lambda_k n}^{\!m}
                                   \binom{m}{\lambda_0 m,\ldots,\lambda_k m}^{\!n}}
              {\displaystyle\binom{mn}{\lambda_0 mn,\ldots,\lambda_k mn}}
       \, (1-1/m)^{k(m-1)/2}.
   \]
\end{conj}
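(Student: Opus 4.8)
The plan is to reduce the factorisation count to a product of ``independent'' semiregular counts times an edge‑disjointness probability, and then to estimate that probability by a switching argument. Order the densities so that $\lambda_0=\max_i\lambda_i$. In any factorisation $(H_0,H_1,\ldots,H_k)$ of $K_{m,n}$ the part $H_0$ is determined by the others, $H_0=K_{m,n}\setminus(H_1\cup\cdots\cup H_k)$; and conversely, if $H_1,\ldots,H_k$ are pairwise edge-disjoint spanning semiregular subgraphs of densities $\lambda_1,\ldots,\lambda_k$, then their complement in $K_{m,n}$ is automatically $(m,n,\lambda_0)$-semiregular, since every vertex of $V_1$ has degree $n-\sum_{i\ge1}\lambda_i n=\lambda_0 n$ there and similarly in $V_2$. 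Hence $R(m,n;\lambda_0,\ldots,\lambda_k)$ equals the number of ordered $k$-tuples of pairwise edge-disjoint labelled semiregular subgraphs of $K_{m,n}$ with densities $\lambda_1,\ldots,\lambda_k$.

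\emph{Probabilistic reformulation.} Let $H_1,\ldots,H_k$ be chosen independently, each $H_i$ uniformly among the $R_{\lambda_i}(m,n)$ labelled $(m,n,\lambda_i)$-semiregular graphs. Then
\[
  R(m,n;\lambda_0,\ldots,\lambda_k) \;=\; P\,\prod_{i=1}^{k}R_{\lambda_i}(m,n),
  \qquad P:=\Pr\bigl[\,H_1,\ldots,H_k\text{ pairwise edge-disjoint}\,\bigr].
\]
By Theorem~\ref{thm:reg}, applied in whichever of its cases covers each $(m,n,\lambda_i)$, we have $\prod_i R_{\lambda_i}(m,n)\sim(1-1/m)^{k(m-1)/2}\prod_i\bigl(\binom{n}{\lambda_i n}^{m}\binom{m}{\lambda_i m}^{n}/\binom{mn}{\lambda_i mn}\bigr)$. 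Telescoping the multinomials in $R'$ into products of binomials and applying Stirling's formula, the quotient of $R'(m,n;\lambda_0,\ldots,\lambda_k)$ by this product collapses to a ratio of falling factorials which equals $\exp\bigl(-(1+o(1))\,mn\sum_{1\le i<j\le k}\lambda_i\lambda_j\bigr)$ in the sparse regime; importantly, the corrections $(1-1/m)^{(m-1)/2}$ cancel. It therefore remains to show that $P$ is asymptotic to the same quantity.

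\emph{Switching estimate of $P$.} When $\lambda_1,\ldots,\lambda_k$ are sufficiently small, estimate $P$ by switchings. Stratify the space of $k$-tuples by the total overlap $t=\sum_{i<j}\card{E(H_i)\cap E(H_j)}$ and let $N_t$ be the number of tuples with overlap exactly $t$, so that $P=N_0/\sum_t N_t$. From a tuple with $t\ge1$, pick an overlap edge $uv\in E(H_i)\cap E(H_j)$ and perform a standard bipartite $2$-switch inside $H_i$: choose $u'v'\in E(H_i)$ with $uv',u'v\notin E(H_i)$ and replace $\{uv,u'v'\}$ by $\{uv',u'v\}$. This preserves the degrees of $H_i$; insisting further that $u'v'$ is not itself an overlap edge and that neither $uv'$ nor $u'v$ lies in any $E(H_\ell)$ with $\ell\ne i$ makes the switch lower $t$ by exactly one. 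Comparing the number of such forward switchings (about $\lambda_i mn$ per overlap edge, hence about $t\lambda_i mn$ in all, times $1-O(k\lambda_{\max})$ for the forbidden-edge conditions) with the number of their reverses (about $\lambda_i^2 mn$ ways to choose the switch partner times about $\sum_{\ell\ne i}\lambda_\ell mn$ ways to choose the recreated overlap edge) yields $N_t/N_{t-1}\sim\mu/t$, where $\mu$ evaluates to $mn\sum_{i<j}\lambda_i\lambda_j$. Summing the recurrence gives $N_t\sim N_0\mu^t/t!$ and hence $P\sim e^{-\mu}$, matching the target. The accounting has to be done simultaneously over all $\binom{k}{2}$ pairs, and to obtain a genuine asymptotic equivalence (rather than merely the exponential order) the switching ratios must be controlled to additive error $o(1)$ in the exponent; this is where the sparseness of the $\lambda_i$ and the bound $k=o(m)$ are used.

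\emph{Conclusion and the main obstacle.} Combining the previous two steps gives $R(m,n;\lambda_0,\ldots,\lambda_k)\sim R'(m,n;\lambda_0,\ldots,\lambda_k)$, the factor $(1-1/m)^{k(m-1)/2}$ coming entirely from the $k$ individual semiregular counts. The reason the general statement remains a conjecture is that neither ingredient is available in full: Theorem~\ref{thm:reg} is not known for all $(m,n,\lambda)$, and — more seriously — the switching analysis breaks down as soon as some $\lambda_i$ is not small, because then overlaps between dense factors are no longer sparse, the switching ratios are no longer essentially $\mu/t$, and the $H_i$ can no longer be treated as approximately independent. Even two dense factors (say $\lambda_0,\lambda_1$ large and the rest small) appear to require a different route: peel off the sparse factors, then enumerate semiregular factorisations of the very dense bipartite graph that remains — a hard problem in its own right, and the source of the ``random semiregular host'' corollary mentioned in the abstract. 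Extending this to all $k=o(m)$ and all density profiles simultaneously seems beyond current methods.
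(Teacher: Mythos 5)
The statement you are addressing is a conjecture: the paper does not prove it in general, but only in the special cases listed in Theorem~\ref{thm:main}, and your own closing paragraph concedes that your argument does not establish it either. So the first thing to say is that what you have written is not a proof of the statement but a strategy sketch that is confined to the regime where all densities except $\lambda_0$ are small. Within that regime your plan (write $R=P\prod_i R_{\lambda_i}$ with $P$ the pairwise edge-disjointness probability of independent uniform factors, and estimate $P$ by switchings) is close in spirit to how the paper proves parts (b) and (c) of Theorem~\ref{thm:main}, although the paper works differently in detail: it builds the factors sequentially using enumeration results with forbidden edges (Lemma~\ref{lem:silver} and Theorem~\ref{thm:MW}), and its switching argument acts on random \emph{relabellings} of a fixed pair of semiregular graphs, via permutations $(a\,e)(b\,f)$, rather than on $2$-switches of the edge set of a random $H_i$.

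The concrete gaps are these. First, the step ``by Theorem~\ref{thm:reg}, applied in whichever of its cases covers each $(m,n,\lambda_i)$'' is not available: Theorem~\ref{thm:reg} does not cover all $(m,n,\lambda)$ (the paper notes, e.g., $\lambda=\frac1m$, $m=n^{1/2}$ is open), so even the product $\prod_i R_{\lambda_i}(m,n)$ cannot be evaluated in general. Second, an error of the form $\exp\(-(1+o(1))\,mn\sum_{i<j}\lambda_i\lambda_j\)$ is too weak for an asymptotic equivalence; you need additive $o(1)$ control in the exponent, which forces quantitative hypotheses such as $m^{1/2}n\sum_{i<j}\lambda_i\lambda_j=o(1)$ or $\lambda^3 m^{-1}n^3=o(1)$ (compare Lemma~\ref{lem:Rapprox}, Theorem~\ref{thm:silver}, Theorem~\ref{thm:disj}), and the corresponding uniform control of the switching ratios over all $t$ (the paper needs the summation lemma of~\cite{GMW} and the truncation at $M$ common edges for this). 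Third, restricting attention to the largest density $\lambda_0$ does not make the remaining densities small: for example all $\lambda_i=\frac1{k+1}$ with bounded $k$ is allowed by the conjecture, and there the heuristics $P\approx\exp(-mn\sum_{i<j}\lambda_i\lambda_j)$ and the claimed collapse of the multinomial ratio both fail (the dense disjointness probability~\eqref{eq:l01} already contains corrections of order $\lambda_1^2\hat\lambda mn$ beyond $\exp(-\lambda_1\hat\lambda mn)$). So the proposal reproduces, at best, the sparse cases the paper already proves, and does not close the conjecture.
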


We will prove the conjecture in six cases.

\begin{thm}\label{thm:main}
Conjecture~\ref{conj:main} holds in the following cases.
\begin{enumerate}\itemsep=0pt
  \item[(a)] $k=1$ and one of the conditions of Theorem~\ref{thm:reg} holds.
   \item[(b)] $k\ge 2$, $m\le n$ and $m^{-1}n^3 \(\sum_{i=1}^k\lambda_i\)^3 = o(1)$.
  \item[(c)] $k\ge 2$, $m \le n$ and $m^{1/2}n\sum_{1\le i<j\le k} \lambda_i\lambda_j=o(1)$.
   \item[(d)] $m=n$, $k=o(n^{6/7})$ and $\lambda_1=\cdots=\lambda_k=\frac 1n$
     (the case of Latin rectangles).
  \item[(e)] $(m,n,\lambda_1)$ satisfies condition 2 of Theorem~\ref{thm:reg},
     and $\lambda_2+\cdots+\lambda_k=O(n^{-1+\eps})$ for sufficiently
     small $\eps>0$.
  \item[(f)] $m=O(1)$ and $1\le k\le m-1$.  In this case we do not need the condition $k=o(m)$.
\end{enumerate}
\end{thm}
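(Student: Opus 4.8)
\emph{Proof plan.}
Part~(a) is immediate: when $k=1$ a spanning $(m,n,\lambda_1)$-factor determines its complement, so $R(m,n;\lambda_0,\lambda_1)=R_{\lambda_1}(m,n)$, while $R'$ collapses to the right-hand side of Theorem~\ref{thm:reg} since $\binom{n}{\lambda_0 n,\lambda_1 n}=\binom{n}{\lambda_1 n}$ (and likewise for the other two coefficients) and $(1-1/m)^{k(m-1)/2}=(1-1/m)^{(m-1)/2}$. For $k\ge2$ the starting point is that if $G_1,\dots,G_k$ are pairwise edge-disjoint spanning semiregular subgraphs of $K_{m,n}$ of densities $\lambda_1,\dots,\lambda_k$, then their union is automatically $(m,n,\lambda_1+\cdots+\lambda_k)$-semiregular and its complement is $(m,n,\lambda_0)$-semiregular; hence $R(m,n;\lambda_0,\dots,\lambda_k)$ is the number of ordered $k$-tuples of pairwise edge-disjoint semiregular subgraphs of the prescribed densities, so
\[
  R(m,n;\lambda_0,\dots,\lambda_k)=\Bigl(\,\prod_{i=1}^{k}R_{\lambda_i}(m,n)\Bigr)P ,
\]
where $P$ is the probability that independent uniformly random $(m,n,\lambda_i)$-semiregular graphs, $i=1,\dots,k$, are pairwise edge-disjoint. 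Writing $R'_i:=R'(m,n;1-\lambda_i,\lambda_i)$ --- which by part~(a) is the right-hand side of Theorem~\ref{thm:reg} --- a careful but routine Stirling estimate of the multinomial coefficients (the powers of $1-1/m$ in $R'$ cancel those in $\prod_i R'_i$) gives $R'(m,n;\lambda_0,\dots,\lambda_k)\big/\prod_{i=1}^{k}R'_i\sim\exp\(-mn\sum_{1\le i<j\le k}\lambda_i\lambda_j\)$; moreover, in both (b) and (c) the stated hypotheses force each $\lambda_i=o\((mn)^{-1/4}\)$ --- for (c) using $\lambda_i\lambda_j\le\sum_{i'<j'}\lambda_{i'}\lambda_{j'}$ together with $\lambda_j m\ge1$ --- so Case~1 of Theorem~\ref{thm:reg} gives $R_{\lambda_i}(m,n)\sim R'_i$. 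Thus in both cases the target $R\sim R'$ reduces to proving $P\sim\exp\(-mn\sum_{i<j}\lambda_i\lambda_j\)$, the value predicted by treating edge-collisions between the factors as independent rare events.

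Establishing this asymptotic for $P$ is where the work lies, and I would do it by a switching argument --- in effect, computing the probability that a family of sufficiently sparse semiregular bipartite graphs is edge-disjoint after a uniformly random relabelling. Fixing a pair $G_i,G_j$ sharing an edge $e$, one deletes $e$ from $G_j$, reinserts it on a currently unused pair of vertices, and repairs the four altered degrees by rotating along a short alternating path; counting the forward and reverse moves relates the labellings in which $G_i$ and $G_j$ share $t$ edges to those in which they share $t-1$, and iterating over the $\binom{k}{2}$ pairs converts $P$ into the claimed exponential. The hypothesis $m^{-1}n^{3}\(\sum_{i=1}^{k}\lambda_i\)^{3}=o(1)$ in case~(b) is exactly what makes the third-order error terms thrown off by the switchings (and the matching third-order terms in the Stirling expansion above) negligible; case~(c), whose hypothesis $m^{1/2}n\sum_{i<j}\lambda_i\lambda_j=o(1)$ covers a partly different range (one density may be comparatively larger, at the cost of fewer admissible collisions), is handled by a coarser version of the same count. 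I expect that controlling these error terms uniformly over all pairs and throughout the iteration is the main obstacle.

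The remaining cases use separate techniques. In case~(d), $R(n,n;1-k/n,1/n,\dots,1/n)$ is exactly the number of $k\times n$ Latin rectangles, which for $k=o(n^{6/7})$ is given asymptotically by the Godsil--McKay formula; it then suffices to check by Stirling's formula that their formula equals $R'$. In case~(e) one conditions on the union $H=G_2\cup\cdots\cup G_k$, automatically $(m,n,\mu)$-semiregular with $\mu=\lambda_2+\cdots+\lambda_k=O(n^{-1+\eps})$: the count splits as a sum over $H$ of the number of decompositions of $H$ into $G_2,\dots,G_k$ (governed by the sparse analysis) times the number of ways to split $K_{m,n}\setminus H$ into the dense factors $G_0,G_1$; since $\mu$ is so small, $K_{m,n}\setminus H$ is a controlled semiregular perturbation of $K_{m,n}$, and one adapts the Greenhill--McKay dense enumeration behind Case~2 of Theorem~\ref{thm:reg} (see~\cite{GMX}) to this perturbed host, the perturbation contributing a factor that on averaging over $H$ yields exactly the overlap correction in $R'$ --- keeping its error terms under control being the delicate point. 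Finally, case~(f) with $m=O(1)$ is a transfer-matrix argument: a factorisation is a sequence of $n$ columns, each one of the $\binom{m}{\lambda_0 m,\dots,\lambda_k m}$ colourings of the $m$ rows with the prescribed colour multiplicities, constrained so that over the $n$ columns each row receives colour~$i$ exactly $\lambda_i n$ times. The multidimensional local central limit theorem, applied to the sum of $n$ independent uniformly-chosen column vectors, produces the leading factor $\binom{n}{\lambda_0 n,\dots,\lambda_k n}^{m}\binom{m}{\lambda_0 m,\dots,\lambda_k m}^{n}\big/\binom{mn}{\lambda_0 mn,\dots,\lambda_k mn}$, while the determinant of the covariance matrix of a single column --- whose sampling-without-replacement structure across the $m$ rows supplies the $1-1/m$ --- produces $(1-1/m)^{k(m-1)/2}$; here $m$ bounded makes $k\le m-1$ bounded, so $k=o(m)$ is not needed.
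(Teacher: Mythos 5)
Your treatment of parts (a), (d), (e) and (f) matches the paper's: (a) is definitional, (d) is Godsil--McKay plus a Stirling check, (e) conditions on the sparse union $H=G_2\cup\cdots\cup G_k$ and combines the sparse count with the Greenhill--McKay disjointness probability, and (f) is exactly the paper's local-CLT/covariance-determinant computation. For (b) and (c) your route genuinely differs, and for (b) it has a gap. You reduce both cases to the single claim that the pairwise-disjointness probability $P$ for independent uniform random factors satisfies $P\sim\exp\(-mn\sum_{i<j}\lambda_i\lambda_j\)$, to be proved by degree-preserving edge-switchings. The paper does use a switching argument for (c) --- though it switches vertex \emph{labellings} of fixed graphs $D,H$ rather than edges, precisely so that the counts depend only on the densities and not on the graphs' structure --- and its relative error per pair is $O(\lambda_d\lambda_h m^{1/2}n)$, which is why the hypothesis of (c) is $m^{1/2}n\sum_{i<j}\lambda_i\lambda_j=o(1)$. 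For (b) the paper does \emph{not} use switchings at all: it iterates the enumeration-with-forbidden-edges result of \cite{silver} (Lemma~\ref{lem:silver}), whose error $O(\lambda_h(\lambda_d+\lambda_h)^2m^{-1}n^3)$ is exactly what the hypothesis of (b) controls. The gap in your plan is that (b) is not contained in (c): e.g.\ $m=n$, $k=2$, $\lambda_1=\lambda_2=n^{-2/3}/\log n$ satisfies (b) while $m^{1/2}n\lambda_1\lambda_2\asymp n^{1/6}/\log^2 n\to\infty$, and $mn\sum_{i<j}\lambda_i\lambda_j\to\infty$ polynomially. So to prove (b) you need the \emph{relative} error in the exponent of $P$ to be $o(1)$ even though the exponent itself diverges, i.e.\ accuracy $O(\lambda^3 m^{-1}n^3)$; a basic one-edge switching of the kind you describe yields errors of the order the paper obtains ($O(m^{1/2}n\varLambda)$ or worse), which blow up in this range. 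Reaching the precision of \cite{silver} by switchings would need higher-order corrections you do not supply, so as written your argument does not cover case~(b).

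Two further points need attention even for case (c). First, your identity $R=\(\prod_i R_{\lambda_i}\)P$ requires $\prod_i R_{\lambda_i}\sim\prod_i R'_i$ with total relative error $o(1)$; when $k\to\infty$ the qualitative statement of Case~1 of Theorem~\ref{thm:reg} is not enough, and you must invoke the explicit error term of \cite{MW} and check that it is summable over the $k$ factors (the paper avoids this by telescoping the forbidden-edge counts, so each step's error already accounts for the accumulated union). Second, your claimed asymptotics $R'/\prod_i R'_i\sim\exp\(-mn\sum_{i<j}\lambda_i\lambda_j\)$ is correct but not entirely routine: the cubic terms $-\frac16\lambda^3mn$ versus $-\frac16\sum_i\lambda_i^3 mn$ do not cancel and must be bounded by $O(\lambda\, mn\sum_{i<j}\lambda_i\lambda_j)$, which is the computation carried out at the end of the paper's proof of Theorem~\ref{thm:coloured} using the expansion in Lemma~\ref{lem:Rapprox}.
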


Part~(a) is just a restatement of Theorem~\ref{thm:reg}.
\red{Part~(b)} will be proved using~\cite{silver}.
Part~(c) will follow from a switching argument applied to the probability of two
randomly labelled semiregular graphs being edge-disjoint.
Part~(d) is a consequence of~\cite{GM}.
Part~(e) will follow from a combination of~\cite{GMX} and Part~(b).
Part~(f) will be proved using the Central Limit Theorem.

\begin{conj}\label{conj:ransplit}
  Let $\lambda_1,\ldots,\lambda_k,\lambda$ be positive numbers such that
  $\sum_{i=1}^k\lambda_i=\lambda$.
  Then, if $n\to\infty$ with $2\le m\le n$, and $1\le k=o(m)$,
  the average number of ways to partition the edges of a uniform
  random $(m,n,\lambda)$-semiregular bipartite graph into
  spanning semiregular subgraphs of density $\lambda_1,\ldots,\lambda_k$
  is asymptotically
   \begin{align*}
      &\frac{R'(m,n;1-\lambda,\lambda_1\ldots,\lambda_k)}
             {R'(m,n;1-\lambda,\lambda)} \\
       &{\kern 4em} =
       \frac{\displaystyle\binom{\lambda n}{\lambda_1 n,\ldots,\lambda_k n}^{\!m}
                                   \binom{\lambda m}{\lambda_1 m,\ldots,\lambda_k m}^{\!n}}
              {\displaystyle\binom{\lambda mn}{\lambda_1 mn,\ldots,\lambda_k mn}}
       \, (1-1/m)^{(k-1)(m-1)/2}.
   \end{align*}
\end{conj}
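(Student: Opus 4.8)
The plan is to replace the averaging by an exact counting identity and then feed the numerator and denominator separately into Conjecture~\ref{conj:main}. Since the vertices of $K_{m,n}$ are labelled, a uniform random $(m,n,\lambda)$-semiregular bipartite graph is a uniform random element of a set of size $N:=R_\lambda(m,n)$, and for such a graph $G$ the quantity being averaged is $f(G)$, the number of ordered partitions of $E(G)$ into spanning semiregular subgraphs of densities $\lambda_1,\ldots,\lambda_k$. A pair $(G,\pi)$ with $\pi$ such a partition of $E(G)$ is exactly the same datum as an ordered partition of $E(K_{m,n})$ into spanning semiregular subgraphs of densities $1-\lambda,\lambda_1,\ldots,\lambda_k$: the density-$(1-\lambda)$ part is the complement $K_{m,n}\setminus G$, and the remaining parts are $\pi$. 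Hence $\sum_G f(G)=R(m,n;1-\lambda,\lambda_1,\ldots,\lambda_k)$, while $N=R(m,n;1-\lambda,\lambda)$ (a density-$\lambda$ subgraph together with its complement), so the average equals
\[
   \frac{R(m,n;1-\lambda,\lambda_1,\ldots,\lambda_k)}{R(m,n;1-\lambda,\lambda)}
\]
\emph{exactly}, which is the first line of the displayed formula.

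Next I would invoke the asymptotics. The denominator is $R_\lambda(m,n)$, which by Theorem~\ref{thm:reg} (equivalently, the $k=1$ case of Conjecture~\ref{conj:main}) is asymptotic to $R'(m,n;1-\lambda,\lambda)$. The numerator is an instance of Conjecture~\ref{conj:main} with $k+1$ parts of densities $1-\lambda,\lambda_1,\ldots,\lambda_k$, and the hypothesis $k=o(m)$ is precisely what that instance needs, so it is asymptotic to $R'(m,n;1-\lambda,\lambda_1,\ldots,\lambda_k)$. Therefore the average is asymptotic to the ratio $R'(m,n;1-\lambda,\lambda_1,\ldots,\lambda_k)\,/\,R'(m,n;1-\lambda,\lambda)$.

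It remains to simplify that ratio of $R'$ values, which is routine. The two powers of $1-1/m$ combine to $(1-1/m)^{(k-1)(m-1)/2}$, and each product of multinomial coefficients collapses via the identity
\[
   \binom{n}{(1-\lambda)n,\lambda_1 n,\ldots,\lambda_k n}\Big/\binom{n}{(1-\lambda)n,\lambda n}=\binom{\lambda n}{\lambda_1 n,\ldots,\lambda_k n}
\]
(cancel $n!$ and $((1-\lambda)n)!$, using $\lambda_1+\cdots+\lambda_k=\lambda$), together with its analogues for $m$ and for $mn$. This yields exactly the claimed right-hand side.

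The only real obstacle is that this derivation is only as strong as Conjecture~\ref{conj:main}: the conclusion is unconditional precisely for those parameter ranges in which $R_\lambda(m,n)$ is covered by Theorem~\ref{thm:reg} \emph{and} the numerator $R(m,n;1-\lambda,\lambda_1,\ldots,\lambda_k)$ falls under one of the cases of Theorem~\ref{thm:main}. Thus cases~(b), (c), (d) and~(e) each supply unconditional instances of the present statement, provided one checks that the corresponding hypotheses on $m$, $n$ and the $\lambda_i$ (and, for the denominator, a condition of Theorem~\ref{thm:reg}) hold simultaneously; beyond this bookkeeping no genuinely new difficulty arises.
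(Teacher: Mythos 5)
Your argument is exactly the paper's own justification: the average is identically $R(m,n;1-\lambda,\lambda_1,\ldots,\lambda_k)/R(m,n;1-\lambda,\lambda)$, and the paper's remark after the conjecture likewise notes that the statement follows whenever Theorem~\ref{thm:reg} covers $(m,n,\lambda)$ and Conjecture~\ref{conj:main} covers $(m,n;1-\lambda,\lambda_1,\ldots,\lambda_k)$, with the multinomial cancellation giving the displayed right-hand side. Since the statement is itself only a conjecture, your correctly flagged dependence on Conjecture~\ref{conj:main} (unconditional only in the cases of Theorem~\ref{thm:main}) is precisely the paper's position, so there is nothing to add.
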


Note that Conjecture~\ref{conj:ransplit} holds if the formula in Theorem~\ref{thm:reg}
holds for $(m,n,\lambda)$ and Conjecture~\ref{conj:main} holds for
$(m,n,1-\lambda,\lambda_1,\ldots,\lambda_k)$.
Thus, many cases of Conjecture~\ref{conj:ransplit} follow from Theorem~\ref{thm:reg}
and Theorem~\ref{thm:main}.

\medskip

For integer $x\ge 0$, $(N)_x$ denotes the falling factorial
$N(N-1)\cdots(N-x+1)$.
\red{It will be convenient to have an approximation for
 $R'(m,n;\lambda_0,\ldots,\lambda_k)$
when $\sum_{i=1}^k\lambda_i$ is small.
\begin{lemma}\label{lem:Rapprox}
 Let $\lambda_0,\ldots,\lambda_k$ be positive numbers such that
$\sum_{i=0}^k\lambda_i=1$.
 Define $\lambda=\sum_{i=1}^k \lambda_i$ and assume $m\le n$ and
 $\lambda=O(m^{-1/4}n^{-1/4})$.  Then
  \begin{align}
        R'(m,n;\lambda_0,\ldots,\lambda_k) &=
       \prod_{i=1}^k \frac{ (\lambda_i mn)!}{(\lambda_i m)!^n\,(\lambda_i n)!^m} 
           \exp\( \varDelta_1(m,n,\lambda) + O(\lambda^4 mn)\) \label{eq:Rp4} \\
        &=
         \prod_{i=1}^k \frac{ (\lambda_i mn)!}{(\lambda_i m)!^n\,(\lambda_i n)!^m}
           \exp\( \varDelta_2(m,n,\lambda) + O(\lambda^3 mn)\), \label{eq:Rp3}
 \end{align}
 where
 \begin{align*}
 \varDelta_1(m,n,\lambda)&= -\dfrac12 k + \dfrac{k}{4m} - \dfrac12\lambda
           + \dfrac14 \lambda(2+\lambda)(m+n)
           - \dfrac16\lambda^2(3+\lambda)mn
           - \dfrac{1}{12}\lambda\(\dfrac mn+\dfrac nm\) \\
  \varDelta_2(m,n,\lambda) &= 
       -\dfrac12  k+ \dfrac12 \lambda(m+n) - \dfrac12\lambda^2 mn.
 \end{align*}
\end{lemma}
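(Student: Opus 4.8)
The plan is to peel off the ``large'' part of $R'$, recognise it as a ratio of falling factorials, and estimate its logarithm with Stirling's formula. Put $\lambda_0=1-\lambda$. Expanding every multinomial coefficient into factorials (for instance $\binom{n}{\lambda_0n,\ldots,\lambda_kn}^{m}=(n!)^{m}/\bigl((\lambda_0n)!^{m}\prod_{i=1}^{k}(\lambda_in)!^{m}\bigr)$) and using the identities $n!/(\lambda_0n)!=(n)_{\lambda n}$, $m!/(\lambda_0m)!=(m)_{\lambda m}$ and $(\lambda_0mn)!/(mn)!=1/(mn)_{\lambda mn}$, one finds after cancellation that
\[
  R'(m,n;\lambda_0,\ldots,\lambda_k)
   =\frac{(n)_{\lambda n}^{\,m}\,(m)_{\lambda m}^{\,n}}{(mn)_{\lambda mn}}\,
     \Bigl(1-\tfrac1m\Bigr)^{\!k(m-1)/2}
     \prod_{i=1}^{k}\frac{(\lambda_imn)!}{(\lambda_im)!^{\,n}\,(\lambda_in)!^{\,m}}.
\]
The product over $i$ is exactly the product appearing in the lemma, so it suffices to show that $A:=\log\bigl((n)_{\lambda n}^{m}(m)_{\lambda m}^{n}/(mn)_{\lambda mn}\bigr)$ satisfies $A+\tfrac12k(m-1)\log(1-1/m)=\varDelta_1+O(\lambda^4mn)=\varDelta_2+O(\lambda^3mn)$.

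Two elementary facts will be used repeatedly. First, each $\lambda_i$ is positive with $\lambda_im$ a positive integer, so $\lambda\ge k/m\ge1/m$; together with $\lambda=O(m^{-1/4}n^{-1/4})$ this forces $n=O(m^3)$, and in particular $m\to\infty$. Second, $\lambda^4mn=O(1)$, so the error term in~\eqref{eq:Rp4} is just $O(1)$. The inequalities $k\le\lambda m$ and $n=O(m^3)$ then bound every remainder that arises --- for example $\lambda^3(m+n)\le\lambda^4mn=O(1)$, $\lambda^2(m/n+n/m)=O(\lambda^2n/m)=O(1)$, and $k/m^2=o(1)$. To evaluate $A$, write $(N)_{\lambda N}=N!/\bigl((1-\lambda)N\bigr)!$ for $N\in\{m,n,mn\}$ and apply $\log N!=N\log N-N+\tfrac12\log(2\pi N)+\tfrac1{12N}+O(N^{-3})$ to numerator and denominator; this gives
\[
  \log(N)_{\lambda N}=\lambda N\log N-(1-\lambda)N\log(1-\lambda)-\lambda N-\tfrac12\log(1-\lambda)-\tfrac{\lambda}{12(1-\lambda)N}+O(N^{-3}).
\]
In the combination $m\log(n)_{\lambda n}+n\log(m)_{\lambda m}-\log(mn)_{\lambda mn}$ the terms $\lambda N\log N$ cancel exactly, the Stirling remainders contribute $O(mn^{-3}+nm^{-3})=O(nm^{-3})=O(1)$, and one is left with
\[
  A=-(1-\lambda)mn\log(1-\lambda)-\lambda mn-\tfrac12(m+n-1)\log(1-\lambda)-\tfrac{\lambda}{12(1-\lambda)}\Bigl(\tfrac mn+\tfrac nm\Bigr)+O(1).
\]

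It remains to substitute $\log(1-\lambda)=-\sum_{r\ge1}\lambda^r/r$ and $\log(1-1/m)=-\sum_{r\ge1}m^{-r}/r$ and collect powers. The first two terms of $A$ combine to $-\tfrac16\lambda^2(3+\lambda)mn+O(\lambda^4mn)$, the third term gives $\tfrac14\lambda(2+\lambda)(m+n)-\tfrac12\lambda+O(1)$, and the fourth gives $-\tfrac1{12}\lambda(m/n+n/m)+O(1)$, while $\tfrac12k(m-1)\log(1-1/m)=-\tfrac12k+\tfrac k{4m}+O(k/m^2)$; summing these and using the bounds above to absorb the error terms gives exactly $\varDelta_1$, which is~\eqref{eq:Rp4}. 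For~\eqref{eq:Rp3} one notes that $\varDelta_1-\varDelta_2=\tfrac k{4m}-\tfrac12\lambda+\tfrac14\lambda^2(m+n)-\tfrac16\lambda^3mn-\tfrac1{12}\lambda(m/n+n/m)$, and each of these five terms is $O(\lambda^3mn)$ because $\lambda^2mn\ge n/m\ge1$ and $\lambda m\ge1$; hence $A+\tfrac12k(m-1)\log(1-1/m)=\varDelta_2+O(\lambda^3mn)$ as well.

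The arithmetic is routine once it has been organised. The only place that genuinely needs care is the bookkeeping of the last step, and it rests on the cheap observation that $\lambda\ge1/m$ forces $n=O(m^3)$: without this bound the Stirling remainder $O(nm^{-3})$, and several of the terms discarded when passing from $A$ to $\varDelta_1$, would fail to be $O(1)$.
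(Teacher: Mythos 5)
Your decomposition of $R'$ and the subsequent computation are correct and follow essentially the same route as the paper, which estimates $(N)_{\lambda N}$ directly from $\sum_{i<\lambda N}\log(1-i/N)$ rather than via Stirling applied to $N!/((1-\lambda)N)!$; the two are equivalent. There is, however, one logical slip in your error bookkeeping. You observe that $\lambda^4 mn=O(1)$ and conclude that ``the error term in \eqref{eq:Rp4} is just $O(1)$'', and you then record several remainders only as $O(1)$: the Stirling tails $O(nm^{-3})$, the $O(\lambda^2)$ left over from expanding $-\tfrac12(m+n-1)\log(1-\lambda)$, and the $O(\lambda^2 n/m)$ correction to the $\lambda(m/n+n/m)$ term. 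This reads the requirement backwards: $O(\lambda^4 mn)\subseteq O(1)$, but since $\lambda^4 mn$ may tend to $0$ (e.g.\ $\lambda=1/m$, $n=m$), an error that is merely $O(1)$ does not establish \eqref{eq:Rp4}. Every discarded quantity must be shown to be $O(\lambda^4 mn)$. Fortunately each one is: $nm^{-3}=O(\lambda^4 mn)$ because $\lambda m\ge1$; $\lambda^2=O(\lambda^4 mn)$ because $\lambda^2mn\ge n/m\ge1$; $\lambda^2 n/m=O(\lambda^4 mn)$ because $\lambda^2m^2\ge 1$; and $k/m^2\le\lambda/m=O(\lambda^4 mn)$ likewise. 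These are exactly the inequalities you already invoke when deducing \eqref{eq:Rp3}, so the repair is a one-line restatement; but as written the passage from $A$ to $\varDelta_1+O(\lambda^4 mn)$ is not justified.
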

\begin{proof} 
If $N\to\infty$ and $\lambda=O(N^{-1/4})$, with $\lambda N$ integer,
\begin{align}
   (N)_{\lambda N} &= N^{\lambda N}
        \exp\biggl(\,\sum_{i=0}^{\lambda N-1}  \log \Bigl(1 - \dfrac iN\Bigr)\biggr) \notag\\  
     &= N^{\lambda N}  \exp\Bigl( -\sum_{r=1}^\infty \dfrac1r
        \sum_{i=0}^{\lambda N-1} \( i/N\)^r \Bigr) \notag \\
       &= N^{\lambda N} 
         \exp\Bigl( -\dfrac16\lambda^2(3+\lambda) N + \dfrac14 \lambda(2+\lambda)
        - \dfrac{\lambda}{12N} + O(\lambda^4 N) \Bigr). \label{eq:fff} 
\end{align}
Note that 
\[
     \binom{N}{\lambda_0,\ldots,\lambda_k} =
       \frac{(N)_{\lambda N}}{\prod_{i=1}^k (\lambda_i N)!}.
\]
Since $\lambda\ge\dfrac km$, $k/m^2=O(\lambda^4 mn)$, so we also have
\[
    (1 - 1/m)^{k(m-1)/2} = \exp\Bigl( -\dfrac12 k + \dfrac k{4m} + O(\lambda^4 mn)\Bigr).
\]
Applying this, together with~\eqref{eq:fff} for $N=m,n,mn$, gives~\eqref{eq:Rp4}.
Removing the terms that are $O(\lambda^3 mn)$ gives~\eqref{eq:Rp3}.
\end{proof}}

\section{A first case of a single dense factor}

In~\cite{silver}, the second author proved a generalized version of the
following lemma.

\begin{lemma}\label{lem:silver}
 Assume $m\le n$, $\lambda_h>0$, $\lambda_d\ge 0$
 and $\lambda_h(\lambda_d^2+\lambda_h^2)m^{-1}n^3=o(1)$.
 Let $D$ be any $(m,n,\lambda_d)$-semiregular graph.
 Then the number of $(m,n,\lambda_h)$-semiregular graphs edge-disjoint from~$D$ is
\[
   \frac{(\lambda_h mn)!}{(\lambda_h m)!^n(\lambda_h n)!^m}
   \exp\( -\dfrac12 (\lambda_h m-1)(\lambda_h n-1) 
     -\lambda_h\lambda_dmn + O(\lambda_h(\lambda_d+\lambda_h)^2m^{-1}n^3)\).
\]
\end{lemma}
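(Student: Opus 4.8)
The plan is to prove this by a switching argument in the bipartite configuration model, following the style of McKay and Wang~\cite{MW}. First I would give each vertex of $V_1$ a set of $\lambda_h n$ stubs and each vertex of $V_2$ a set of $\lambda_h m$ stubs, so that each side carries $N:=\lambda_h mn$ stubs in total; a \emph{configuration} is one of the $N!$ bijections from the $V_1$-stubs to the $V_2$-stubs, and it projects to a bipartite multigraph in the obvious way. Every $(m,n,\lambda_h)$-semiregular simple graph edge-disjoint from $D$ is the projection of exactly $(\lambda_h m)!^n(\lambda_h n)!^m$ configurations, so, writing $p$ for the probability that a uniformly random configuration is simple \emph{and} edge-disjoint from $D$, the number we want equals $p$ times the prefactor $(\lambda_h mn)!/\bigl((\lambda_h m)!^n(\lambda_h n)!^m\bigr)$ of the lemma. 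Hence it suffices to show
\[
   p=\exp\Bigl(-\tfrac12(\lambda_h m-1)(\lambda_h n-1)-\lambda_h\lambda_d mn
               +O\bigl(\lambda_h(\lambda_d+\lambda_h)^2 m^{-1}n^3\bigr)\Bigr).
\]

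A random configuration can be \emph{bad} in two ways: it may contain a \emph{doublet}, an unordered pair of parallel edges joining some $u\in V_1$ to some $v\in V_2$, or a \emph{clash}, an edge projecting onto an edge of $D$. For each type I would introduce a defect-removing switching --- for a doublet, the standard operation rerouting the two parallel edges through two further generic edges; for a clash on $uv$, a two-edge swap moving that edge to a slot that is neither forbidden by $D$ nor parallel to an existing edge --- and establish that these switchings are \emph{nearly regular}: the forward and backward move-counts between adjacent defect levels agree up to a relative error that accumulates, over the recursion, to $O\bigl(\lambda_h(\lambda_d+\lambda_h)^2m^{-1}n^3\bigr)$ in the exponent.

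Next, let $C(a,b)$ count configurations with exactly $a$ doublets and $b$ clashes and no higher-order coincidence (a multiple edge of multiplicity at least three, or a doublet lying over an edge of $D$); one checks that the higher-order cases are negligible, which is where $m\le n$ and the hypothesis enter. Solving the resulting recursion for $C(a,b)$ gives $p=C(0,0)/\sum_{a,b}C(a,b)=\exp(-\mu_1-\mu_2+O(\cdots))$, where $\mu_1=\tfrac12(\lambda_h m-1)(\lambda_h n-1)$ and $\mu_2=\lambda_h\lambda_d mn$ are the first-order mean numbers of doublets and clashes: $\mu_1$ emerges from the doublet switching counts, the factors $\lambda_h m-1$ and $\lambda_h n-1$ coming from the choices of pairs of stubs at a vertex, and $\mu_2$ from summing the single-edge probability $\approx\lambda_h$ over the $\lambda_d mn$ edges of $D$. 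Incidental factors such as $N/(N-1)$ fall inside the error term, since integrality forces $\lambda_h\ge 1/m$ and hence $(\lambda_d+\lambda_h)^2m^{-1}n^3\ge n^3/m^3\ge 1$.

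The main obstacle is the error analysis, and within it the coupling between the two defect types: removing a clash may create a doublet and removing a doublet may create a clash, so the two recursions interact and the cross term must be estimated rather than merely bounded. This is exactly where the three pieces of the error term come from --- $\lambda_h^3m^{-1}n^3$ from the doublet/doublet interaction, $\lambda_h^2\lambda_d m^{-1}n^3$ from the doublet/clash interaction, and $\lambda_h\lambda_d^2 m^{-1}n^3$ from the clash/clash interaction --- and the hypothesis $\lambda_h(\lambda_d^2+\lambda_h^2)m^{-1}n^3=o(1)$ is precisely the statement that their sum is $o(1)$, which is what legitimises the Poisson heuristic. One way to reduce the bookkeeping is to quote a sufficiently precise sparse formula for $R_{\lambda_h}(m,n)$ (this is the $\lambda_d=0$ case, essentially Case~1 of Theorem~\ref{thm:reg}) and then run only the clash switching among simple $(m,n,\lambda_h)$-graphs, so that the sole new estimate required is $\Pr[\text{no clash}\mid\text{simple}]=e^{-\lambda_h\lambda_d mn}(1+O(\cdots))$; alternatively, since a generalised form of the lemma is proved in~\cite{silver}, one could simply invoke that.
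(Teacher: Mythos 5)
The paper does not prove this lemma at all: it is imported verbatim (as a special case) from~\cite{silver}, and the only ``proof'' in the paper is that citation. Your fallback option of simply invoking~\cite{silver} is therefore literally what the authors do. Your primary plan --- the pairing/configuration model with two defect types (doublets and clashes) removed by switchings --- is a faithful reconstruction of the method actually used in that reference, and you have the key quantities right: the prefactor $(\lambda_h mn)!/\bigl((\lambda_h m)!^n(\lambda_h n)!^m\bigr)$ from the $(\lambda_h n)!^m(\lambda_h m)!^n$-to-one projection, the exact mean $\mu_1=\frac12(\lambda_h m-1)(\lambda_h n-1)$ for doublets (keeping the $-1$'s matters, since $\lambda_h(m+n)$ need not be absorbed by the error term, and you correctly trace them to the choices of stub pairs), the mean $\mu_2=\lambda_h\lambda_d mn$ for clashes, and the observation that $\lambda_h\ge 1/m$ and $m\le n$ make $(\lambda_d+\lambda_h)^2m^{-1}n^3\ge 1$ so that $O(1/N)$ nuisance factors are harmless. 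The one caveat is that, as written, this is a plan rather than a proof: the entire analytic content lies in the sentence asserting that the forward and backward switching counts ``agree up to a relative error that accumulates to $O\bigl(\lambda_h(\lambda_d+\lambda_h)^2m^{-1}n^3\bigr)$,'' together with the summation over defect levels $(a,b)$ and the dismissal of higher-order coincidences, none of which you carry out; your attribution of the three error contributions to doublet/doublet, doublet/clash and clash/clash interactions is plausible but unverified. Since the paper supplies no proof to compare against, the honest summary is: your approach matches the cited source's, your leading terms are correct, and completing the argument would require doing the switching estimates you have only described.
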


Now we can prove Theorem~\ref{thm:main}(b).

\begin{thm}\label{thm:silver}
 Assume $m\le n$, \red{$k\ge 1$,} and that $\lambda_1,\ldots,\lambda_k$ are positive numbers
 with $\sum_{i=1}^k \lambda_i = o(m^{1/3}/n)$.
 Define $\lambda =\sum_{i=1}^k \lambda_i$ and $\lambda_0=1-\lambda$.
 Then
 \[
      R(m,n;\lambda_0,\ldots,\lambda_k) = R'(m,n;\lambda_0,\ldots,\lambda_k)
      \( 1 + O(\lambda^3 m^{-1}n^3)\).
 \]
\end{thm}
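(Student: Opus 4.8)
The plan is to count the ordered factorisations $R(m,n;\lambda_0,\ldots,\lambda_k)$ by building the sparse factors $F_1,\ldots,F_k$ one at a time, each edge-disjoint from its predecessors, and then observing that the remaining edges automatically form the dense factor $F_0$ of density $\lambda_0=1-\lambda$. At stage $i$ we have already placed factors of total density $\mu_{i-1}=\lambda_1+\cdots+\lambda_{i-1}$; their union $D_{i-1}$ is an $(m,n,\mu_{i-1})$-semiregular graph, and the number of ways to choose $F_i$ as an $(m,n,\lambda_i)$-semiregular graph edge-disjoint from $D_{i-1}$ is given by Lemma~\ref{lem:silver} with $\lambda_h=\lambda_i$ and $\lambda_d=\mu_{i-1}$. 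Crucially, this count depends only on $\lambda_i$ and $\mu_{i-1}$, not on the particular graph $D_{i-1}$, so we may multiply the per-stage counts without worrying about how the earlier choices interact.

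The hypothesis $\lambda=o(m^{1/3}/n)$ gives $\lambda^3 m^{-1}n^3=o(1)$, and since $\lambda_i\le\lambda$ and $\mu_{i-1}\le\lambda$ for every $i$, the error term $O\(\lambda_i(\mu_{i-1}+\lambda_i)^2 m^{-1}n^3\)$ in Lemma~\ref{lem:silver} is at most $O(\lambda_i\lambda^2 m^{-1}n^3)$; summing over $i$ gives a total error of $O(\lambda^3 m^{-1}n^3)$ inside the exponent, which is $o(1)$, so it exponentiates to the factor $1+O(\lambda^3 m^{-1}n^3)$ in the statement. Thus
\[
  R(m,n;\lambda_0,\ldots,\lambda_k) = \prod_{i=1}^k \frac{(\lambda_i mn)!}{(\lambda_i m)!^n(\lambda_i n)!^m}
     \exp\Bigl( -\tfrac12\sum_{i=1}^k(\lambda_i m-1)(\lambda_i n-1) - mn\sum_{i=1}^k\lambda_i\mu_{i-1} + O(\lambda^3 m^{-1}n^3)\Bigr).
\]

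The remaining work is to match this exponent against $R'(m,n;\lambda_0,\ldots,\lambda_k)$ via Lemma~\ref{lem:Rapprox}. The density bound $\lambda=o(m^{1/3}/n)$ is stronger than $\lambda=O(m^{-1/4}n^{-1/4})$ (indeed $m^{1/3}/n\le m^{-1/4}n^{-1/4}$ when $m\le n$), so Lemma~\ref{lem:Rapprox} applies, and I would use the coarser form~\eqref{eq:Rp3}, whose exponent is $\varDelta_2(m,n,\lambda)=-\tfrac12 k+\tfrac12\lambda(m+n)-\tfrac12\lambda^2 mn$ with error $O(\lambda^3 mn)$ — and since $m\le n$ gives $\lambda^3 mn\le\lambda^3 m^{-1}n^3$, that error is absorbed. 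So it remains to check the algebraic identity that the exponent from the telescoping product equals $\varDelta_2$ up to $O(\lambda^3 mn)$. Expanding $\sum_i(\lambda_i m-1)(\lambda_i n-1) = mn\sum_i\lambda_i^2 - (m+n)\lambda + k$ and noting $\sum_{i=1}^k\lambda_i\mu_{i-1} = \sum_{i<j}\lambda_i\lambda_j = \tfrac12(\lambda^2-\sum_i\lambda_i^2)$, the two quadratic-in-$\lambda_i$ pieces combine as $-\tfrac12 mn\sum_i\lambda_i^2 - mn\cdot\tfrac12(\lambda^2-\sum_i\lambda_i^2) = -\tfrac12\lambda^2 mn$, exactly cancelling the cross terms; collecting the linear and constant pieces yields precisely $-\tfrac12 k+\tfrac12\lambda(m+n)-\tfrac12\lambda^2 mn$, as required.

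**The main obstacle** is not any of these calculations — which are routine once the telescoping is set up — but rather confirming that Lemma~\ref{lem:silver} is genuinely being applied within its stated range at every stage: one must verify $\lambda_i(\mu_{i-1}^2+\lambda_i^2)m^{-1}n^3=o(1)$ for each $i$, which follows from $\mu_{i-1}^2+\lambda_i^2\le 2\lambda^2$ and $\lambda_i\le\lambda$, giving the bound $2\lambda^3 m^{-1}n^3=o(1)$; and one must also double-check that Lemma~\ref{lem:silver} indeed produces a count independent of the choice of $D$ (which it does, since the stated formula has no dependence on $D$), so that the product over stages is valid. A secondary point to be careful about is that $\lambda m$ need not be an integer for the intermediate unions — but this is not an issue, since at each stage $D_{i-1}$ is a union of genuinely existing semiregular factors, so $\mu_{i-1}m$ and $\mu_{i-1}n$ are automatically integers, and the final dense factor $F_0$ has density $1-\lambda$ with $(1-\lambda)m$, $(1-\lambda)n$ integral by the standing assumptions.
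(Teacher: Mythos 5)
Your proposal is correct and follows essentially the same route as the paper: iterate Lemma~\ref{lem:silver} with $\lambda_h=\lambda_i$ and $\lambda_d=\lambda_1+\cdots+\lambda_{i-1}$, telescope the exponents to $-\frac12 k+\frac12\lambda(m+n)-\frac12\lambda^2 mn$, and compare with $R'$ via~\eqref{eq:Rp3} of Lemma~\ref{lem:Rapprox}, absorbing its $O(\lambda^3 mn)$ error using $mn=O(m^{-1}n^3)$ for $m\le n$. Your explicit verifications (the per-stage hypothesis of Lemma~\ref{lem:silver}, the integrality of the intermediate densities, and the algebraic cancellation) are exactly the steps the paper dismisses as routine.
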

\begin{proof}
   \red{The case $k=1$ corresponds to $\lambda_d=0$ in Lemma~\ref{lem:silver},
   so assume $k\ge 2$.}
   For notational convenience, write
   the argument of the exponential in Lemma~\ref{lem:silver} as
   $A(\lambda_d,\lambda_h)+O(\delta(\lambda_d,\lambda_h))$.
   We can partition $K_{m,n}$ into spanning semiregular subgraphs of
   density $\lambda_0,\ldots,\lambda_k$ by first choosing an
   $(m,n,\lambda_1)$-semireg\-ular graph~$D_1$, then choosing an 
   $(m,n,\lambda_2)$-semiregular graph~$D_2$ disjoint from~$D_1$,
   then an $(m,n,\lambda_3)$-semiregular graph~$D_3$ disjoint from~$D_1\cup D_2$,
   and so on \red{until we have chosen disjoint $D_1,\ldots,D_k$.
   Since the density of $D_1\cup\cdots\cup D_j=\lambda_1+\cdots+\lambda_j$ 
   for each~$j$, we have}
   \begin{align*}
       R(m,n;&\lambda_0,\ldots,\lambda_k) =
         \exp\( \hat A + O(\hat\delta)\)
         \prod_{i=1}^k \frac{ (\lambda_i mn)!}{(\lambda_i m)!^n\,(\lambda_i n)!^m},
             \qquad\text{where} \\[-1ex]
        \hat A&= A(0,\lambda_1)+A(\lambda_1,\lambda_2)+A(\lambda_1+\lambda_2,\lambda_3)
             + \cdots + A\Bigl({\textstyle\sum_{i=1}^{k-1}}\lambda_i,\lambda_k\Bigr), \\
        \hat \delta&= \delta(0,\lambda_1)+\delta(\lambda_1,\lambda_2)+
        \delta(\lambda_1+\lambda_2,\lambda_3)
             + \cdots + \delta\Bigl({\textstyle\sum_{i=1}^{k-1}}\lambda_i,\lambda_k\Bigr) . 
   \end{align*}
   By routine induction, we find that
   \[
       \hat A = -\dfrac12  k + \dfrac12 \lambda(m+n) - \dfrac12 \lambda^2mn
   \]
   and $\hat\delta = O(\lambda^3 m^{-1}n^3)$.
   
   \red{Since $m\le n$ we have $mn=O(m^{-1}n^3)$, which completes the
   proof in conjunction with~\eqref{eq:Rp3},}
\end{proof}   

\section{A second case of one dense factor}\label{s:stepone}

The enumeration of sparse semiregular bipartite graphs was extended
to higher degrees by McKay and Wang~\cite{MW} and generalised by
Greenhill, McKay and Wang~\cite{GMW}.
However, unlike Lemma~\ref{lem:silver}, there was no allowance 
\red{in~\cite{GMW, MW}}
for a set of forbidden edges.  In order to use the more accurate enumeration
for our purposes, we consider the problem of forbidden edges in
a more general form that may be of independent interest. Instead of
considering a random semiregular graph, we consider an arbitrary
semiregular graph that is labelled at random. 

When we write of a semiregular bipartite graph on $(V_1,V_2)$ being
randomly (re)labelled, we mean that $V_1$ and $V_2$ are independently
permuted ($m!\,n!$ possibilities altogether of equal probability).
In this section we consider two semiregular bipartite graphs on $(V_1,V_2)$:
an $(m,n,\lambda_d)$-semiregular graph~$D$ and
an $(m,n,\lambda_h)$-semiregular graph~$H$.
If $H$ is randomly labelled, what is the probability that
it is edge-disjoint from~$D$?

Define $M = \lceil \lambda_d\lambda_h mn\log n\rceil$.
We will work under the following assumptions.
\begin{equation}\label{eq:assume}
   n\to\infty, \quad m\le n, \quad
   \lambda_d,\lambda_h>0, \quad \lambda_d^2\lambda_h^2 mn^2=o(1).
\end{equation}

\begin{lemma}\label{lem:assume}
 Under assumptions~\eqref{eq:assume}, we have
 \red{$m\to\infty$, $n=o(m^{3/2})$, and}
 $m^{-1}\le \lambda_d,\lambda_h = o(m^{1/2}n^{-1})$.
 In addition, $\lambda_d \lambda_h=o(m^{-1/2}n^{-1})$
 and $\log n\le M=o(m^{1/2}\log n)$.
\end{lemma}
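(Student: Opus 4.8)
The plan is a short chain of elementary estimates starting from~\eqref{eq:assume} together with the standing integrality conventions for semiregular graphs. First I would record the lower bounds: since $D$ is $(m,n,\lambda_d)$-semiregular with $\lambda_d>0$, the common degree $\lambda_d m$ of the vertices in $V_2$ is a positive integer, so $\lambda_d\ge m^{-1}$, and likewise $\lambda_h\ge m^{-1}$. This already gives the lower bounds on $\lambda_d$ and $\lambda_h$ claimed in the lemma.

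For the upper bounds I would use the single hypothesis $\lambda_d^2\lambda_h^2 mn^2=o(1)$ asymmetrically: replacing one factor $\lambda_h$ by its lower bound $m^{-1}$ gives $\lambda_d^2 n^2 m^{-1}\le \lambda_d^2\lambda_h^2 mn^2=o(1)$, whence $\lambda_d=o(m^{1/2}n^{-1})$, and the symmetric argument yields $\lambda_h=o(m^{1/2}n^{-1})$. Chaining the lower and upper bounds for $\lambda_d$ then gives $m^{-1}=o(m^{1/2}n^{-1})$, i.e.\ $n=o(m^{3/2})$; since $n\to\infty$ this forces $m^{3/2}\to\infty$, hence $m\to\infty$.

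It remains to handle $\lambda_d\lambda_h$ and $M$. Rewriting the hypothesis as $(\lambda_d\lambda_h)^2=o(m^{-1}n^{-2})$ gives $\lambda_d\lambda_h=o(m^{-1/2}n^{-1})$. For $M=\lceil\lambda_d\lambda_h mn\log n\rceil$, on one side $\lambda_d\lambda_h mn\log n\ge m^{-2}\cdot mn\log n=(n/m)\log n\ge\log n$ because $m\le n$, so $M\ge\lceil\log n\rceil\ge\log n$; on the other side $\lambda_d\lambda_h mn\log n=o(m^{-1/2}n^{-1})\cdot mn\log n=o(m^{1/2}\log n)$, and since $m^{1/2}\log n\to\infty$ the rounding up contributes only an $O(1)$ term, so $M=o(m^{1/2}\log n)$. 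No step here is genuinely difficult and there is no real obstacle; the only point worth flagging is the asymmetric exploitation of $\lambda_d^2\lambda_h^2mn^2=o(1)$ in combination with the integrality bound $\lambda_d,\lambda_h\ge m^{-1}$, which is what simultaneously separates the bounds on the two densities and pins down $n=o(m^{3/2})$.
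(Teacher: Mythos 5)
Your proposal is correct and follows the same route as the paper: the paper's proof consists precisely of the observation that nonemptiness forces $\lambda_d,\lambda_h\ge 1/m$ (degree at least 1 in $V_2$), with the remaining deductions from $\lambda_d^2\lambda_h^2mn^2=o(1)$ left as elementary, and your write-up simply fills in those elementary steps correctly.
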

\begin{proof}
  Since $D,H$ are not empty, $\lambda_d,\lambda_h\ge\frac1m$, corresponding
  to degree~1 in~$V_2$.  The rest is elementary.
\end{proof}

\begin{lemma}\label{lem:basic}
Let $D$ be an $(m, n, \lambda_d)$-semiregular bipartite graph and 
$H$ be an $(m, n, \lambda_h)$-semiregular bipartite graph
satisfying Assumptions~\eqref{eq:assume}.
Then, with probability $1- O(\lambda_d^2 \lambda_h^2 mn^2)$, a random labeling 
of $H$ does not have any path of length~2 in common with~$D$
and the number of edges in common with $D$ is less than~$M$.
\end{lemma}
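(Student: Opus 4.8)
The plan is to control the two bad events separately. Write $\phi=(\sigma,\tau)$ for the random relabelling, let $Y$ be the number of paths of length~$2$ common to $D$ and $\phi(H)$, and let $X$ be the number of edges common to $D$ and $\phi(H)$; the two assertions of the lemma are $Y=0$ and $X<M$, so it suffices to bound $\Pr(Y\ge1)$ and $\Pr(Y=0,\ X\ge M)$, whose sum is $\Pr(Y\ge1\text{ or }X\ge M)$.

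For $Y$ I would use a first-moment estimate. A path of length~$2$ has a well-defined middle vertex, lying in $V_1$ or in $V_2$, and $\phi$ preserves which part the middle vertex is in. Counting cherries by their middle vertex, $D$ has $n\binom{\lambda_d m}{2}$ of them with middle in $V_2$ and $m\binom{\lambda_d n}{2}$ with middle in $V_1$, and likewise for $H$; moreover the probability that $\phi$ carries a given cherry of $H$ with middle in $V_2$ onto a given such cherry of $D$ is $2/(n(m)_2)$, and it is $2/(m(n)_2)$ in the $V_1$ case. Hence
\[
  \E\,Y \;=\; \frac{n\,(\lambda_d m)_2(\lambda_h m)_2}{2\,(m)_2}\;+\;\frac{m\,(\lambda_d n)_2(\lambda_h n)_2}{2\,(n)_2}
        \;=\; O(\lambda_d^2\lambda_h^2 m^2 n)+O(\lambda_d^2\lambda_h^2 mn^2),
\]
which is $O(\lambda_d^2\lambda_h^2 mn^2)$ because $m\le n$, and Markov's inequality then gives $\Pr(Y\ge1)=O(\lambda_d^2\lambda_h^2 mn^2)$.

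For $X$ I would use the first statement as a structural input. When $Y=0$ no two common edges share a vertex, so the common edges form a matching, and therefore $X\ge M$ forces the existence of a set $S=\{a_1b_1,\dots,a_Mb_M\}$ of $M$ edges of $H$ that is a matching and satisfies $\phi(S)\subseteq E(D)$; a union bound over such $S$ gives $\Pr(Y=0,\ X\ge M)\le\sum_S\Pr(\phi(S)\subseteq E(D))$. Since the $a_i$ are distinct and the $b_i$ are distinct, $\sigma$ and $\tau$ restrict to independent uniform injections there, so $\Pr(\phi(S)\subseteq E(D))$ equals $(m)_M^{-1}(n)_M^{-1}$ times the number of ordered $M$-tuples of edges of $D$ with distinct left endpoints and distinct right endpoints, which is at most $(\lambda_d mn)^M$. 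By Lemma~\ref{lem:assume} we have $n=o(m^{3/2})$, hence $\log n=o(m^{1/2})$, hence $M=o(m^{1/2}\log n)=o(m)$ and $M/m=o(1)$; so $(m)_M(n)_M\ge(mn)^Me^{-O(M^2/m)}$, while the number of $M$-edge matchings of $H$ is at most $\binom{\lambda_h mn}{M}\le(e\lambda_h mn/M)^M$. Combining, and using $M\ge\lambda_d\lambda_h mn\log n$,
\[
  \Pr(Y=0,\ X\ge M)\;\le\;\Bigl(\frac{e\,\lambda_d\lambda_h mn}{M}\Bigr)^{\!M}e^{O(M^2/m)}\;\le\;\Bigl(\frac{e}{\log n}\Bigr)^{\!M}e^{o(M)},
\]
since $M^2/m=(M/m)M=o(M)$. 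The right side equals $\exp(-(1+o(1))M\log\log n)$, which is at most $n^{-(1/2)\log\log n}=o(n^{-1})$ for large $n$ because $M\ge\log n$; and $\lambda_d,\lambda_h\ge m^{-1}$ together with $m\le n$ give $\lambda_d^2\lambda_h^2 mn^2\ge n^{-1}$, so this probability is also $O(\lambda_d^2\lambda_h^2 mn^2)$, and adding the two estimates finishes the proof.

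The main obstacle will be the second part, and specifically the need to restrict the union bound to matchings. An arbitrary $M$-edge subgraph of $H$ can be carried into $D$ with probability far exceeding $\lambda_d^M$ — for instance a subgraph assembled from edge-disjoint $4$-cycles embeds into $D$ with probability as large as $\lambda_d^{M/2}$ — so a naive union bound over \emph{all} $M$-element subsets of $E(H)$ is too weak. It is exactly the ``no common path of length~$2$'' conclusion that rescues the argument, by forcing the common edge set to be a matching, so the two halves of the lemma genuinely have to be proved in tandem. A secondary nuisance is that $M^2/m$ is only $o(M)$ and not $o(1)$, so one must check that the errors in $(m)_M(n)_M\approx(mn)^M$ and in replacing $e\lambda_d\lambda_h mn/M$ by $e/\log n$ are absorbed into the final bound.
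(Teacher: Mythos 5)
Your proof is correct and follows essentially the same route as the paper's: a first-moment bound on common cherries, followed by a union bound over $M$-edge matchings of $H$ (using the no-common-$P_2$ event to force the common edges to be independent), with the final comparison to $\lambda_d^2\lambda_h^2mn^2$ via $\lambda_d,\lambda_h\ge m^{-1}$ and $m\le n$. Your handling of the error terms $(m)_M(n)_M\ge(mn)^Me^{-O(M^2/m)}$ is in fact slightly more explicit than the paper's inequality $mn\le 2(m-M)(n-M)$, but the argument is the same.
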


\begin{proof}
Graphs $D$ and $H$ have $O(mn^2\lambda_d^2)$ and
$O(mn^2\lambda_h^2)$ paths of length~2 with the central vertex in~$V_1$,
\red{respectively}.
The probability that such a path in $H$ matches a \red{given} path in $D$ when
randomly labelled is $O(m^{-1}n^{-2})$.  So the expected number of
such coincidences is $O(\lambda_d^2 \lambda_h^2 mn^2)$.
Similarly, the expected number of coincidences between paths of
length~2 with central vertex in $V_2$ is $O(\lambda_d^2 \lambda_h^2 m^2n)$,
\red{which is $O(\lambda_d^2 \lambda_h^2 mn^2)$ because $m\le n$
by assumption}.
Thus, with probability $1-O(\lambda_d^2 \lambda_h^2 m^2n)$, a
random labelling of $H$ has no paths of length~2 in common with~$D$.

Now consider a set $S$ of $M$ edges of $H$. In light of the preceding,
we can assume they are independent edges.  There are at most 
$\binom{\lambda_h mn}{M}$ choices of $S$, and at most
$\binom{\lambda_d mn}{M}$ choices of a set of $M$ edges of $D$ that
$S$ might map onto.
The  probability that $S$
maps onto a particular set of $M$ independent edges of~$D$ is
\[
    \frac{ M!\, (m-M)!\,(n-M)!}{m!\,n!} \le \frac{M!}{(m-M)^M (n-M)^M}.
\]

Since $M=o(m)$ and $M=o(n)$, $mn\le 2(m-M)(n-M)$
for sufficiently large~$n$.
Using $M!\ge (M/e)^M$, we find that the expected number of sets of
$M$ independent edges of $H$ that map onto edges of $D$ is at most
\[
    \biggl( \frac {\lambda_h\lambda_d\, m^2n^2 e}{M(m-M)(n-M)}\biggr)^{\!M}
    \le \biggl( \frac{2e}{\log n} \biggr)^{\!\log n} 
    = O(n^{-t}) \quad\text{for any $t>0$}.
\]
\red{By Lemma~\ref{lem:assume}, $\lambda_d^2\lambda_h^2mn^2\ge n^{-1}$,
so the probability that $D$ and $H$ have more than $M$ edges in common
is $O(\lambda_d^2\lambda_h^2mn^2)$. This completes the proof.}
\end{proof}

Let $\calL(t)$ be the set  of all labelings of the vertices of $H$ with no common
paths of length 2 with $D$ and exactly $t$ edges in common with $D$.
Define $L(t)=|\calL(t)|$, so in particular
the number of labelings of the vertices of $H$ with no common edges with $D$ is $L(0)$. 
Let
    \[
        T = \sum_{t =0}^{M-1}  L(t).
    \]   
In the next step we will estimate the value of $T/L(0)$ by the switching method.
   

\bigskip

A \textit{forward switching} is a permutation $(a \,  e)(b \, f)$ of the vertices of $H$ such that
\begin{itemize}\itemsep=0pt
 \item $a,e\in V_1$ are distinct, and $b,f\in V_2$ are distinct.
 \item  $ab$ is a common edge of $D$ and $H$,
 \item  $ef$ is a non-edge of $D$, and 
 \item after the permutation, the edges common to $D$ and $H$ are the
         same except that $ab$ is no longer a common edge.
\end{itemize}

\noindent
A \textit{reverse switching} is a permutation $(a \,  e)(b \, f)$ of the vertices
      of $H$ such that
\begin{itemize}\itemsep=0pt
\item $a,e\in V_1$ are distinct, and $b,f\in V_2$ are distinct.
 \item  $ab$ is an edge of $D$ that is not an edge of $H$,
 \item  $ef$  is an edge of $H$ that is not an edge of $D$, and 
 \item after the permutation, the edges common to $D$ and $H$ are the
         same except that $ab$ \red{becomes} a common edge of $D$ and $H$.
\end{itemize}

\begin{lemma}\label{lem:Lrat}
Assume Conditions~\eqref{eq:assume}.
Then, uniformly for $1\le t\le M$,
\[
   \frac{L(t)}{L(t-1)} = \frac{(\lambda_d\, mn-t+1)(\lambda_h\, mn-t+1)}{tmn}
      \biggl(1 + O\Bigl(\frac tm + \frac{1}{m^{1/2}}\Bigr)\biggr).
\]
\end{lemma}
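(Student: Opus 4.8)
The plan is to estimate $L(t)/L(t-1)$ by double counting the pairs $(\sigma,\pi)$ with $\sigma\in\calL(t)$ and $\pi$ a forward switching applicable to~$\sigma$. A forward switching always carries an element of $\calL(t)$ to one of $\calL(t-1)$: it deletes one edge from the set of common edges, which is a matching precisely because of the no-common-$P_2$ condition, and a matching minus an edge is still a matching. Conversely, a reverse switching returns an element of $\calL(t-1)$ to $\calL(t)$ exactly when it does not create a common path of length~$2$, that is, when its two new endpoints avoid the at most $t$ vertices already on common edges. Hence the pairs above are in bijection with the pairs $(\sigma',\pi')$, $\sigma'\in\calL(t-1)$ and $\pi'$ such an admissible reverse switching. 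Writing $f(\sigma)$ and $g(\sigma')$ for the corresponding numbers of switchings, this gives $\sum_{\sigma\in\calL(t)}f(\sigma)=\sum_{\sigma'\in\calL(t-1)}g(\sigma')$, so it suffices to prove $f(\sigma)=tmn\bigl(1+O(t/m+m^{-1/2})\bigr)$ uniformly on $\calL(t)$ and $g(\sigma')=(\lambda_d mn-t+1)(\lambda_h mn-t+1)\bigl(1+O(t/m+m^{-1/2})\bigr)$ uniformly on $\calL(t-1)$, since dividing then gives the claim.

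For the forward count, fix $\sigma\in\calL(t)$ and put $G=\sigma(H)$. Choosing the common edge $ab$ to delete contributes a factor $t$, and the remaining choice is the pair $(e,f)\in V_1\times V_2$ of new endpoints: there are $(m-1)(n-1)=mn\bigl(1+O(1/m)\bigr)$ pairs with $e\ne a$ and $f\ne b$, and the cleanliness conditions (that $ef\notin E(D)$, and that exchanging the neighbourhoods at $\{a,e\}$ and at $\{b,f\}$ creates or destroys no common edge other than $ab$) forbid some of them. Here the no-common-$P_2$ property is what keeps the forbidden sets small, since it makes the $t$ common edges a matching and hence places at most one common edge at each vertex. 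The pairs forbidden because $e$ or $f$ meets a common edge then number $O(t)(m+n)=O(t/m)\cdot mn$, and those forbidden for the remaining reasons are bounded by sums of terms of shape $\lambda_d mn$, $\lambda_h mn$, $\lambda_d\lambda_h m^2n$ and $\lambda_d\lambda_h mn^2$, each of which is $o(m^{-1/2})\cdot mn$ by Lemma~\ref{lem:assume}. This gives $f(\sigma)=tmn\bigl(1+O(t/m+m^{-1/2})\bigr)$.

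The reverse count runs in parallel, with $G'=\sigma'(H)$. I would pick the edge $ab\in E(D)\setminus E(G')$ to be made common ($\lambda_d mn-(t-1)$ choices, after excluding the $t-1$ current common edges) and the edge $ef\in E(G')\setminus E(D)$ to move onto it ($\lambda_h mn-(t-1)$ choices), producing the main term $(\lambda_d mn-t+1)(\lambda_h mn-t+1)$. Each admissibility/cleanliness requirement — that none of $a,b,e,f$ already lies on a common edge, that $ef\notin E(D)$ and $ab\notin E(G')$, and that the neighbourhood exchanges produce no common edge beyond $ab$ — removes a family of triples of relative size $O(t/m)$ when a common-edge endpoint is involved and $o(m^{-1/2})$ when it carries an extra factor $\lambda_d$ or $\lambda_h$ bounded via Lemma~\ref{lem:assume}. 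Hence $g(\sigma')=(\lambda_d mn-t+1)(\lambda_h mn-t+1)\bigl(1+O(t/m+m^{-1/2})\bigr)$, and dividing by the estimate for $f$ finishes the proof.

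The substance here is bookkeeping rather than a new idea. The two things that will need care are: (i) confirming that the forward and reverse switchings are mutually inverse on the relevant classes, so that the double-counting identity is exact and nothing is miscounted; and (ii) listing, for both $f(\sigma)$ and $g(\sigma')$, every family of forbidden or inadmissible $(e,f)$ or $(ab,ef)$ and checking that it is absorbed into $O(t/m+m^{-1/2})$ using $m^{-1}\le\lambda_d,\lambda_h=o(m^{1/2}n^{-1})$ and $\lambda_d\lambda_h=o(m^{-1/2}n^{-1})$ from Lemma~\ref{lem:assume}. The no-common-$P_2$ restriction built into $\calL(t)$ is precisely what makes step~(ii) tractable, by turning all the ``common-edge'' corrections into multiples of $t$.
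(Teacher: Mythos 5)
Your proposal is correct and follows essentially the same route as the paper: the same forward/reverse switchings $(a\,e)(b\,f)$, the same double-count equating $\sum_{\sigma\in\calL(t)}f(\sigma)$ with $\sum_{\sigma'\in\calL(t-1)}g(\sigma')$, the same main terms $tmn$ and $(\lambda_d mn-t+1)(\lambda_h mn-t+1)$, and the same absorption of the correction families into $O(t/m+m^{-1/2})$ via Lemma~\ref{lem:assume}. The only differences are cosmetic (e.g.\ starting the forward count from all $(m-1)(n-1)$ pairs rather than from the $(1-\lambda_d)mn$ non-edges of $D$), and your explicit exclusion of $ef\in E(H)$ in the forward count is a sound piece of bookkeeping.
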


\begin{proof}
By using a forward switching, we will convert  a labeling $R \in \calL(t)$ to a
labeling $R'\in \calL(t-1)$. Without loss of generality,  we suppose that $R$ is the identity,
since our bounds will be independent of the structure of $H$ other
than its density.

There are $t$ choices for edge $ab$.
We will bound the choices of $e,f$ for a fixed choice of $ab$.
Graph $D$ has $(1-\lambda_d)mn$ non-edges $ef$, including
at most $m+n=O(n)$ for which $e=a$ or $f=b$.
In addition, we must ensure that no common edges are created
\red{(implying that there remain no common paths of length~2)},
and none other than $ab$ are destroyed.

Since  $D$ and $H$  have no paths of  length 2 in common, there are no other
common edges of $H$ and $D$ incident to $a$ or $b$.
Therefore the only way to destroy a common edge other than $ab$ is
if $e$ or $f$ are incident to a common edge. 
This eliminates at most $t(n+m)=O(tn)$ pairs $e,f$.

Creation of a new common edge can only occur if there is a path
\red{of length~2} from~$a$
to~$e$, or from~$b$ to~$f$, consisting of one edge from $H$ and one from~$D$.
This eliminates at most $4\lambda_d\lambda_h mn$ choices of $ef$.

Using Lemma~\ref{lem:assume} we find that
 the number of forward switchings is
\begin{align*}
  W_F &= t\( (1-\lambda_d)mn - O(n+tn+\lambda_d\lambda_h mn)\) \\
     &= t \,mn \biggl(1 + O\Bigl(\frac tm + \frac{m^{1/2}}n\Bigr)\biggr).
\end{align*}

A reverse switching converts a labeling $R' \in \calL(t-1)$ to a labeling $R$ in $\calL(t)$.
Again, without loss of generality, we suppose that $R'$ is the identity. There are
$\lambda_d\, mn-t+1$ choices for an edge $ab$ in~$D$ and $\lambda_h mn-t+1$ choices
\red{for an} edge~$ef$ in~$H$, in each case avoiding the $t-1$ common edges.
From these we must subtract any choices that create or destroy a common edge
except the new common edge $ab$ we intend to create,
\red{as well as those with $a=e$ or $b=f$}.

There are at most $\lambda_d\lambda_h mn^2$ choices of $a,b,e,f$ such that
$a=e$ and at most that number (since $m\le n$) such that $b=f$.

To destroy a common edge, at least one of $a,b,e,f$ must be the
endpoint of a common edge.  The number of such choices is bounded by
$4(t-1)\lambda_d\lambda_hmn^2$.

To create a new common edge other than $ab$, there must be a path \red{of length~2}
from~$a$ to~$e$, or from~$b$ to~$f$, consisting of one edge from $H$ and
one from~$D$.  This eliminates at most $\lambda_d^2\lambda_h^2m^2n^3$ cases.

Using Lemma~\ref{lem:assume} we find that
 the number of \red{reverse} switchings is
\begin{align*}
  W_R &= (\lambda_d mn-t+1)(\lambda_h mn-t+1) 
     + O\( t\lambda_d\lambda_h mn^2 + \lambda_d^2\lambda_h^2 m^2n^3\) \\
     &= (\lambda_d mn-t+1)(\lambda_h mn-t+1) 
           \biggl( 1 + O\Bigl(\frac tm + \frac{1}{m^{1/2}} \Bigr)\biggr).
\end{align*}

The lemma now follows from the ratio $W_R/W_F$, using $m\le n$.
\end{proof}

We will need the following summation lemma from \cite[Cor.~4.5]{GMW}.

\begin{lemma}\label{sumlemma}
Let $Z \ge 2$ be an integer and, for $1\le i \le Z$, let real numbers $A(i)$, $B(i)$
be given such that $A(i) \ge 0$ and $1 - (i - 1)B(i) \ge 0$. Define
 $A_1 = \min _{i=1}^{Z} A(i)$, $A_2 =\max_{i=1}^{Z} A(i), 
 C_1 = \min_{i=1}^{Z} A(i)B(i)$ and $C_2 = \max_{i=1}^{Z} A(i)B(i)$.
 Suppose that there exists
$\hat{c}$ with $0 < \hat{c} < \frac13$
 such that $\max\{A/Z, |C|\} \le \hat{c}$
 for all $A \in [A_1,A_2]$,
$C \in [C_1,C_2]$. Define $n_0, \ldots , n_Z$ by $n_0 = 1$ and
\[ n_i/ n_{i-1}=\dfrac1i A(i) (1 - (i -1)B(i)) \]
for $1 \le i \le Z$, with the following interpretation: if $A(i) = 0$ or $1 - (i - 1)B(i) = 0$, then
$n_j = 0$ for $ i \le j \le Z$. Then 
\[
   \Sigma_{1} \le \sum_{i=0}^{Z}n_i \le \Sigma_2,
\]
where
\[
  \Sigma_1 = \exp\(A_1 - \dfrac12 A_1C_2)-(2e\hat{c}\)^Z
\]
and 
\[
    \Sigma_2=\exp\(A_2-\dfrac12A_2C_1+\dfrac12 A_2C_1^2\)+ (2e\hat{c} )^Z. 
    \hbox to0pt{\qquad\qquad\qed\hss}
\]
\end{lemma}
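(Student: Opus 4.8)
The plan is to prove the lemma by comparing the sequence $(n_i)$ against two explicit ``model'' sequences whose sums have a closed form, and then to simplify the resulting logarithms by Taylor expansion. Writing $C(i)=A(i)B(i)\in[C_1,C_2]$, the recurrence unrolls to
\[
  n_i=\frac1{i!}\prod_{j=1}^i\bigl(A(j)-(j-1)C(j)\bigr),
\]
in which every factor equals $A(j)\bigl(1-(j-1)B(j)\bigr)\ge0$ by hypothesis, and a vanishing factor forces all later $n_j$ to vanish. For real $\alpha\ge0$ and $|\gamma|<1$ set $p_i(\alpha,\gamma)=\frac1{i!}\prod_{j=1}^i(\alpha-(j-1)\gamma)$, read as $\alpha^i/i!$ when $\gamma=0$; then $p_i(\alpha,\gamma)=\gamma^i\binom{\alpha/\gamma}{i}$ and the binomial series gives $\sum_{i\ge0}p_i(\alpha,\gamma)=(1+\gamma)^{\alpha/\gamma}$, convergent since $|\gamma|<1$.

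Next I would record the term-by-term comparisons. Since $A(j)\le A_2$, $C(j)\ge C_1$ and $j\ge1$, every factor obeys $0\le A(j)-(j-1)C(j)\le A_2-(j-1)C_1$; in particular $A_2-(j-1)C_1\ge0$ for all $j\le Z$, so $0\le n_i\le p_i(A_2,C_1)$ for $0\le i\le Z$. Symmetrically $A(j)-(j-1)C(j)\ge A_1-(j-1)C_2$, giving $n_i\ge p_i(A_1,C_2)$ for $i$ up to the last index $Z'\le Z$ at which these factors stay nonnegative (a restriction only when $C_2>0$, in which case the discarded tail of $p_i(A_1,C_2)$ is of convergent alternating type and dropping it only helps the lower bound). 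Summing,
\[
  \sum_{i=0}^Z n_i\le(1+C_1)^{A_2/C_1}+\sum_{i>Z}\bigl|p_i(A_2,C_1)\bigr|,\qquad
  \sum_{i=0}^Z n_i\ge(1+C_2)^{A_1/C_2}-\sum_{i>Z'}\bigl|p_i(A_1,C_2)\bigr| .
\]

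The two residual tails are precisely where $(2e\hat c)^Z$ appears. From $\max\{A/Z\}\le\hat c$ we get $A_2\le\hat c Z$, and $|C_1|\le\hat c$, so each factor obeys $|A_2-(j-1)C_1|\le A_2+Z|C_1|\le 2\hat c Z$, whence $|p_i(A_2,C_1)|\le(2\hat c Z)^i/i!$; using $i!\ge(i/e)^i$ and summing the remainder gives $\sum_{i>Z}|p_i(A_2,C_1)|\le(2e\hat c)^Z$, and likewise for the lower-bound tail. It then remains to simplify the leading terms. From $\frac1\gamma\log(1+\gamma)=1-\frac\gamma2+\frac{\gamma^2}3-\cdots$ together with $\hat c<\frac13$, the tail of that series is squeezed so that, for $A\ge0$,
\[
  A-\tfrac12A\gamma\le\frac A\gamma\log(1+\gamma)\le A-\tfrac12A\gamma+\tfrac12A\gamma^2 .
\]
Applying the left inequality with $(A,\gamma)=(A_1,C_2)$ and the right with $(A,\gamma)=(A_2,C_1)$ turns $(1+C_2)^{A_1/C_2}$ into $\exp(A_1-\tfrac12A_1C_2)$ and $(1+C_1)^{A_2/C_1}$ into $\exp(A_2-\tfrac12A_2C_1+\tfrac12A_2C_1^2)$, which is exactly $\Sigma_1\le\sum_{i=0}^Z n_i\le\Sigma_2$.

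The main obstacle is not any single estimate but the bookkeeping around degenerate cases: the convention for a vanishing factor (which collapses the tail of $(n_i)$), the signs of $C_1$ and $C_2$ (which determine whether a model sequence is eventually monotone or alternating, and hence whether truncating it is harmless or needs the $(2e\hat c)^Z$ slack), and the possibility $A_1=0$. Each must be checked to confirm that the two one-sided inequalities hold and that all the accumulated truncation errors collapse into the single term $\pm(2e\hat c)^Z$. (This is Corollary~4.5 of~\cite{GMW}, and the argument there follows these lines.)
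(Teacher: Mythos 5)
The paper does not prove this lemma at all: it is quoted verbatim (with a \qed in the statement) from Greenhill, McKay and Wang \cite[Cor.~4.5]{GMW}, so there is no in-paper proof to compare against. Your reconstruction follows the same lines as the proof in that source --- sandwich each $n_i$ between the binomial-type model terms $p_i(A_1,C_2)$ and $p_i(A_2,C_1)$, sum via $(1+\gamma)^{\alpha/\gamma}$, absorb the truncation error into $(2e\hat c)^Z$, and convert the closed forms to exponentials with the two-sided bound $1-\tfrac\gamma2\le\gamma^{-1}\log(1+\gamma)\le1-\tfrac\gamma2+\tfrac{\gamma^2}2$ valid for $\abs\gamma<\tfrac13$ --- and the skeleton is sound, including your handling of the alternating lower-bound tail (the first discarded term $p_{Z'+1}$ is nonpositive, so by the Leibniz criterion the whole discarded tail is nonpositive and dropping it can only increase the partial sum).

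One intermediate estimate is wrong as written, though repairable. You claim $\abs{p_i(A_2,C_1)}\le(2\hat cZ)^i/i!$ for $i>Z$ via the factor bound $\abs{A_2-(j-1)C_1}\le A_2+Z\abs{C_1}\le2\hat cZ$; but in the tail the index $j-1$ ranges up to $i-1>Z$, so the factors are only bounded by $\hat c(Z+j-1)$, which exceeds $2\hat cZ$ once $j-1>Z$. The fix is routine: $\abs{p_i}\le\hat c^{\,i}\binom{Z+i-1}{i}\le\hat c^{\,i}2^{Z+i-1}$, whence $\sum_{i>Z}\abs{p_i}\le2^{Z-1}(2\hat c)^{Z+1}/(1-2\hat c)\le3\hat c\,(4\hat c)^Z\le(2e\hat c)^Z$ using $\hat c<\tfrac13$ and $4<2e$. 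With that substitution your argument goes through.
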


\begin{lemma}\label{lem:tsum}
Under Assumptions~\eqref{eq:assume} we have
\[
     \frac{T}{L(0)} = \exp\( \lambda_d\lambda_h mn + O(\lambda_d\lambda_h m^{1/2}n) \).
\]
\end{lemma}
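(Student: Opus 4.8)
The plan is to apply the summation lemma (Lemma~\ref{sumlemma}) to the sum $T = \sum_{t=0}^{M-1} L(t)$, after normalising by $L(0)$. Write $n_i = L(i)/L(0)$ for $0 \le i \le M-1$, so $n_0 = 1$ and, by Lemma~\ref{lem:Lrat},
\[
  \frac{n_i}{n_{i-1}} = \frac{(\lambda_d mn - i + 1)(\lambda_h mn - i + 1)}{i\,mn}\Bigl(1 + O\bigl(i/m + m^{-1/2}\bigr)\Bigr).
\]
To match the form $n_i/n_{i-1} = \frac1i A(i)(1 - (i-1)B(i))$ required by Lemma~\ref{sumlemma}, I would set $A(i)$ to absorb the leading constant together with the $O(\cdot)$ error factor, and $B(i)$ to carry the linear-in-$i$ decay. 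Concretely, the leading term is $\frac1i\cdot\frac{(\lambda_d mn)(\lambda_h mn)}{mn}(1 - O(i))$ where the $O(i)$ collects the $-i+1$ shifts in both factors; so $A(i) = \lambda_d\lambda_h mn\,(1 + O(i/m + m^{-1/2}))$ and $B(i) = (\lambda_d mn)^{-1} + (\lambda_h mn)^{-1} + \cdots$, with $Z = M-1$.

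Next I would verify the hypotheses of Lemma~\ref{sumlemma} using Lemma~\ref{lem:assume}. We have $A_1, A_2 = \lambda_d\lambda_h mn\,(1 + o(1))$, and since $M = \lceil \lambda_d\lambda_h mn \log n\rceil$ we get $A/Z = A/(M-1) = (1 + o(1))/\log n \to 0$, so this is below $\hat c$ for any fixed $\hat c < \frac13$ once $n$ is large. For $C = A(i)B(i)$, the dominant piece is $\lambda_d\lambda_h mn \cdot (\lambda_d mn)^{-1} = \lambda_h/\lambda_d$ plus the symmetric term $\lambda_d/\lambda_h$; this is \emph{not} automatically small, so I would instead keep $B(i)$ in the form where $iB(i) \le (M-1)B(i) = O((\log n)(\lambda_d^{-1} + \lambda_h^{-1})(mn)^{-1})$ — and here Lemma~\ref{lem:assume} gives $\lambda_d, \lambda_h \ge m^{-1}$, so $iB(i) = O((m\log n)/(mn)) = O((\log n)/n) = o(1)$, ensuring $1 - (i-1)B(i) \ge 0$ throughout. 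The quantity that actually enters $\Sigma_1, \Sigma_2$ is $A_jC_\ell = \sum_i A(i)^2 B(i)$-type products; I would track that $A_2 C_1$ and $A_2 C_1^2$ are each $O(\lambda_d\lambda_h m^{1/2}n)$ — this is precisely the claimed error term, and checking it is the main bookkeeping task. One needs $\lambda_d\lambda_h mn \cdot (\lambda_h/\lambda_d + \lambda_d/\lambda_h) \cdot (\text{correction})$; using $\lambda_d^2\lambda_h^2 mn^2 = o(1)$ and $\lambda_d,\lambda_h \ge m^{-1}$ one bounds $A C$ terms by $O(\lambda_d\lambda_h m^{1/2}n)$.

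Finally, Lemma~\ref{sumlemma} yields $\Sigma_1 \le \sum_{i=0}^{M-1} n_i \le \Sigma_2$ with $\Sigma_1 = \exp(A_1 - \tfrac12 A_1 C_2) - (2e\hat c)^{M-1}$ and $\Sigma_2 = \exp(A_2 - \tfrac12 A_2 C_1 + \tfrac12 A_2 C_1^2) + (2e\hat c)^{M-1}$. The additive error $(2e\hat c)^{M-1}$ is negligible since $M \ge \log n \to \infty$ and $2e\hat c < 1$ can be arranged, so it is absorbed into the exponential error. The leading term is $A_1 \sim A_2 \sim \lambda_d\lambda_h mn$, and the correction terms $\tfrac12 A_j C_\ell$ contribute only $O(\lambda_d\lambda_h m^{1/2}n)$ as bounded above; combining the $O(i/m + m^{-1/2})$ factor in $A(i)$ (which over the range $i \le M$ contributes $O(M/m + m^{-1/2}) = O(m^{-1/2}\log n)$ relative error, i.e.\ $O(\lambda_d\lambda_h mn \cdot m^{-1/2}\log n)$ — here I would check this is $O(\lambda_d\lambda_h m^{1/2}n)$, which it is exactly when the $\log n$ is absorbed, using $M = o(m^{1/2}\log n)$ from Lemma~\ref{lem:assume}). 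Assembling, $T/L(0) = \exp(\lambda_d\lambda_h mn + O(\lambda_d\lambda_h m^{1/2}n))$, as required. The main obstacle is the error-term bookkeeping: one must carefully confirm that every secondary contribution — the $-i+1$ shifts, the switching error $O(i/m + m^{-1/2})$, and the $C$-dependent corrections in $\Sigma_1, \Sigma_2$ — collapses into the single bound $O(\lambda_d\lambda_h m^{1/2}n)$, which relies essentially on the inequalities $\lambda_d,\lambda_h \ge m^{-1}$, $m \le n$, and $\lambda_d^2\lambda_h^2 mn^2 = o(1)$ from Lemma~\ref{lem:assume}.
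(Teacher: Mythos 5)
Your overall strategy is the same as the paper's: normalise by $L(0)$, feed the ratios of Lemma~\ref{lem:Lrat} into Lemma~\ref{sumlemma} with $Z\approx M$, and read off the exponent. However, your bookkeeping has a genuine gap, caused by where you put the switching error. You place the factor $1+O(i/m+m^{-1/2})$ inside $A(i)$. Since $A_2=\max_i A(i)$ enters $\Sigma_2$ at full strength, this inflates the exponent by up to $\lambda_d\lambda_h mn\cdot O(M/m)=O(\lambda_d\lambda_h nM)=O(\lambda_d^2\lambda_h^2 mn^2\log n)$, i.e.\ a relative error $O(m^{-1/2}\log n)$, exactly as you note at the end. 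Your claim that this ``is $O(\lambda_d\lambda_h m^{1/2}n)$ when the $\log n$ is absorbed'' is not justified: the assumptions give $\lambda_d\lambda_h m^{1/2}n=o(1)$ but say nothing about $\lambda_d\lambda_h m^{1/2}n\log n$ (e.g.\ $\lambda_d\lambda_h m^{1/2}n\sim 1/\log\log n$ is allowed, and then your error term is unbounded). So as written you only prove the lemma with error $O(\lambda_d\lambda_h m^{1/2}n\log n)$, which is strictly weaker than the statement and insufficient downstream. The paper avoids this by putting the $i$-linear part $O(i/m)$ into $B(i)$ rather than $A(i)$: then $A(t)=\lambda_d\lambda_h mn(1+O(m^{-1/2}))$, $B(t)=O(m^{-1})$, so $C_1,C_2=O(\lambda_d\lambda_h n)=o(m^{-1/2})$ and the $B$-dependent corrections enter only through $A_1C_2$, $A_2C_1$, $A_2C_1^2=O(\lambda_d^2\lambda_h^2 mn^2)=o(\lambda_d\lambda_h m^{1/2}n)$, using $\lambda_d^2\lambda_h^2 mn^2=(\lambda_d\lambda_h m^{1/2}n)^2$ and $\lambda_d\lambda_h m^{1/2}n=o(1)$.

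A second, smaller slip: with your choice $B(i)=(\lambda_d mn)^{-1}+(\lambda_h mn)^{-1}$ you compute $C=A(i)B(i)\approx \lambda_h/\lambda_d+\lambda_d/\lambda_h$, but in fact $\lambda_d\lambda_h mn\cdot(\lambda_d mn)^{-1}=\lambda_h$, so $C\approx\lambda_d+\lambda_h=o(m^{-1/2})$ by Lemma~\ref{lem:assume}; the hypothesis $\abs{C}\le\hat c<\frac13$ therefore holds automatically. Your workaround — checking $iB(i)=o(1)$ — only addresses the nonnegativity condition $1-(i-1)B(i)\ge 0$, not the bound on $\abs{C}$ that Lemma~\ref{sumlemma} actually requires, so that hypothesis is never correctly verified in your write-up (and had $C$ really been $\lambda_h/\lambda_d+\lambda_d/\lambda_h\ge 2$, the lemma would simply not apply). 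With the corrected computation of $C$ and with the $O(i/m)$ term moved into $B(i)$, your argument becomes the paper's proof.
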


\begin{proof}
Lemma~\ref{lem:assume} and the definition of $M$
allow us to write Lemma~\ref{lem:Lrat} as
\[
   \frac{L(t)}{L(t-1)} = \dfrac 1t A(t) \(1-(t-1)B(t)\),
\]
where $A(t)=\lambda_d\lambda_h mn(1+O(m^{-1/2}))$
and $B(t)=O(m^{-1})$.
In each case, the $O(\,)$ expression is a function of $t$
but uniform over $1\le t\le M$.

Clearly $A(t)>0$, and using Lemma~\ref{lem:assume}, we can
check that $1-(t-1)B(t)>0$ for $1\le t\le M$.
Define $A_1$, $A_2$, $C_1$ and $C_2$ as in Lemma~\ref{sumlemma}. 
This gives
\begin{align*}
A_1,A_2&=  \lambda_d\lambda_h mn + O(\lambda_d\lambda_hm^{1/2}n), \\
C_1,C_2&= O(\lambda_d\lambda_h n) = o(m^{-1/2}).
\end{align*}

The condition $\max\{A/M, |C|\} \le \hat{c}$ of Lemma~\ref{sumlemma} is satisfied
with $\hat{c}$ any sufficiently small constant.
Using Lemma~\ref{lem:assume}, we also have
$A_1C_2, A_2C_1= O(\lambda_d^2 \lambda_h^2 mn^2)$,
$A_2C_1^2=O(\lambda_d^2 \lambda_h^2 m^{1/2}n^2)$
and $(2e\hat{c})^M=O(n^{-1})$ if $\hat{c}$ is
small enough.
Since $\lambda_d^2 \lambda_h^2 mn^2=o(1)$ by assumption,
$\lambda_d^2 \lambda_h^2 mn^2=o(\lambda_d \lambda_h m^{1/2}n)$.
The lemma now follows from Lemma~\ref{sumlemma}.
\end{proof}

\begin{lemma}\label{lem:prob}
Let $D$ be an $(m,n, \lambda_d)$-semiregular bipartite graph and
$H$ be an $(m,n, \lambda_h)$-semiregular bipartite graph such that
Assumptions~\eqref{eq:assume} hold.
Then the probability that
a random labelling of $H$ is edge-disjoint from $D$ is
\[
   \exp\( -\lambda_d\lambda_h mn + O(\lambda_d\lambda_h m^{1/2}n) \).
\]
\end{lemma}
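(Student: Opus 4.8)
The plan is to observe that the probability we seek is simply $L(0)/(m!\,n!)$, and then to combine Lemmas~\ref{lem:basic} and~\ref{lem:tsum}. Indeed, a random labelling of $H$ is edge-disjoint from $D$ exactly when the relabelled copy of $H$ shares no edge with $D$; in that event it automatically shares no path of length~2 with $D$ either, so the set of such labellings is precisely $\calL(0)$, of size $L(0)$, out of the $m!\,n!$ equally likely labellings.

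First I would pin down $T=\sum_{t=0}^{M-1}L(t)$. By definition, $T$ counts the labellings of $H$ having no common path of length~2 with $D$ and fewer than $M$ common edges; Lemma~\ref{lem:basic} says a random labelling has exactly these two properties with probability $1-O(\lambda_d^2\lambda_h^2mn^2)$, so
\[
   T = m!\,n!\,\bigl(1-O(\lambda_d^2\lambda_h^2mn^2)\bigr).
\]
Next, Lemma~\ref{lem:tsum} gives $T/L(0)=\exp\bigl(\lambda_d\lambda_h mn+O(\lambda_d\lambda_h m^{1/2}n)\bigr)$. Dividing,
\[
   \frac{L(0)}{m!\,n!} = \frac{L(0)}{T}\cdot\frac{T}{m!\,n!}
     = \exp\bigl(-\lambda_d\lambda_h mn+O(\lambda_d\lambda_h m^{1/2}n)\bigr)\,
       \bigl(1-O(\lambda_d^2\lambda_h^2mn^2)\bigr).
\]

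It then remains only to absorb the trailing factor into the error term of the exponential. By Lemma~\ref{lem:assume} we have $\lambda_d\lambda_h=o(m^{-1/2}n^{-1})$, so $\lambda_d\lambda_h m^{1/2}n=o(1)$ and hence $\lambda_d^2\lambda_h^2mn^2=(\lambda_d\lambda_h m^{1/2}n)^2\le\lambda_d\lambda_h m^{1/2}n$ for all large~$n$; thus $1-O(\lambda_d^2\lambda_h^2mn^2)=\exp\bigl(O(\lambda_d\lambda_h m^{1/2}n)\bigr)$, and substituting completes the proof.

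The argument is short because the two substantive ingredients — the switching estimate (Lemma~\ref{lem:tsum}) for $T/L(0)$ and the concentration bound (Lemma~\ref{lem:basic}) for $T/(m!\,n!)$ — are already in hand; there is no real obstacle beyond the routine check, via Lemma~\ref{lem:assume}, that the truncation error $O(\lambda_d^2\lambda_h^2mn^2)$ is dominated by the error $O(\lambda_d\lambda_h m^{1/2}n)$ that is unavoidable anyway. The one point deserving a moment's care is the identification of the edge-disjoint labellings with $\calL(0)$ rather than with a larger class, which hinges on the trivial remark that zero common edges forces zero common paths of length~2.
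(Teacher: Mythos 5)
Your proof is correct and follows essentially the same route as the paper: factor the probability as $\bigl(T/(m!\,n!)\bigr)\cdot\bigl(L(0)/T\bigr)$, apply Lemmas~\ref{lem:basic} and~\ref{lem:tsum}, and absorb the $1-O(\lambda_d^2\lambda_h^2mn^2)$ factor into the exponential error term using $\lambda_d\lambda_h m^{1/2}n\to 0$. Your explicit remark that zero common edges forces zero common paths of length~2 (so the edge-disjoint labellings are exactly $\calL(0)$) is a point the paper leaves implicit, but there is no substantive difference.
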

\begin{proof}
The probability that there are no common paths of length two and less
than $M$ edges in common is $1-O(\lambda_d^2\lambda_h^2 mn^2)$
by Lemma~\ref{lem:basic}.
Subject to those conditions, the probability of no common edge is
\[
   \exp\( -\lambda_d\lambda_h mn + O(\lambda_d\lambda_h m^{1/2}n) \)
\]
by Lemma~\ref{lem:tsum}.  Multiplying these probabilities together
gives the theorem, since\\ $\lambda_d\lambda_h m^{1/2}n\to 0$ by~\eqref{eq:assume}.
\end{proof}

\begin{thm}\label{thm:disj}
For $i=1, \ldots, k$, let $D_i$ be an $(m,n,\lambda_i)$-semiregular  bipartite graph.
Assume $m \le n$ and $m^{1/2}n\sum_{1\le i<j\le k} \lambda_i\lambda_j=o(1)$.
Then the probability that $D_1,D_2,\ldots,D_k$ are edge-disjoint after
labelling at random is
\[
\exp\biggl(-mn \sum_{1 \le i < j \le k} \lambda_i \lambda_j
  +O\Bigl(m^{1/2}n \sum_{1 \le i < j \le k} \lambda_i \lambda_j\Bigr)\biggr).
\]
\end{thm}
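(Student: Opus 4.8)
The plan is to expose the random relabellings of $D_1,\dots,D_k$ one at a time and reduce to Lemma~\ref{lem:prob} at each step, the ``forbidden'' graph being the union of the copies already placed. Write $\mu_j=\lambda_1+\cdots+\lambda_j$. Let the relabellings be independent and uniform, and for $2\le j\le k$ let $E_j$ be the event that the relabelled copies of $D_1,\dots,D_j$ are pairwise edge-disjoint, with $E_1$ the whole sample space. Then $E_k\subseteq\cdots\subseteq E_1$, so $\Pr[E_k]=\prod_{j=2}^{k}\Pr[E_j\mid E_{j-1}]$. Fixing any outcome of the first $j-1$ relabellings lying in $E_{j-1}$, the union of the corresponding copies of $D_1,\dots,D_{j-1}$ is a definite graph $D$; since the degrees of edge-disjoint semiregular graphs add, $D$ is $(m,n,\mu_{j-1})$-semiregular. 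The relabelling of $D_j$ is independent of the earlier ones, so the conditional probability that its copy avoids $D$ is exactly the quantity given by Lemma~\ref{lem:prob} with $\lambda_d=\mu_{j-1}$ and $\lambda_h=\lambda_j$.

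First I would verify that Assumptions~\eqref{eq:assume} hold for each of these applications. We have $\mu_{j-1}>0$ (as $j\ge2$ and the $\lambda_i$ are positive), $\lambda_j>0$, and $m\le n$, $n\to\infty$ by hypothesis. For the remaining condition, note $\mu_{j-1}\lambda_j=\sum_{i<j}\lambda_i\lambda_j\le\sum_{1\le a<b\le k}\lambda_a\lambda_b$, so the hypothesis $m^{1/2}n\sum_{a<b}\lambda_a\lambda_b=o(1)$ yields $\mu_{j-1}\lambda_j=o(m^{-1/2}n^{-1})$ and hence $(\mu_{j-1}\lambda_j)^2mn^2=o(1)$, which is $\lambda_d^2\lambda_h^2mn^2=o(1)$. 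Lemma~\ref{lem:prob} then gives, with an implied constant uniform over $j$ and over the structure of the graphs,
\[
  \Pr[E_j\mid E_{j-1}]=\exp\bigl(-\mu_{j-1}\lambda_j\,mn+O(\mu_{j-1}\lambda_j\,m^{1/2}n)\bigr).
\]

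Then I would take the product over $j=2,\dots,k$. The main exponents telescope: $\sum_{j=2}^{k}\mu_{j-1}\lambda_j=\sum_{1\le i<j\le k}\lambda_i\lambda_j$. Because the implied constant in the error term is one fixed constant, the error terms sum to $O\bigl(m^{1/2}n\sum_{i<j}\lambda_i\lambda_j\bigr)$, which is $o(1)$ by hypothesis; so combining the $k-1$ factors causes no trouble and produces the stated expression.

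The one delicate point — really the only thing beyond routine bookkeeping — is the conditioning argument: one must use that Lemma~\ref{lem:prob} is uniform over all $(m,n,\mu_{j-1})$-semiregular forbidden graphs, so that conditioning on the actual outcomes of the earlier relabellings (not merely on $E_{j-1}$) leaves the bound unchanged, and that independence of the $j$-th relabelling from the earlier ones lets us treat the accumulated union as a fixed forbidden graph. Averaging the resulting bound over the outcomes in $E_{j-1}$ then gives the displayed estimate for $\Pr[E_j\mid E_{j-1}]$ itself, and the telescoping product completes the proof.
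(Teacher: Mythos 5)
Your proposal is correct and follows the same route as the paper, which simply states that the theorem is an immediate consequence of Lemma~\ref{lem:prob} applied iteratively; you have filled in exactly the intended details (conditioning on the earlier relabellings, noting the union is $(m,n,\mu_{j-1})$-semiregular, verifying Assumptions~\eqref{eq:assume} via $\mu_{j-1}\lambda_j\le\sum_{a<b}\lambda_a\lambda_b$, and telescoping the exponents). The attention you give to uniformity of the error term over the forbidden graph and over $j$ is a point the paper leaves implicit, and it is handled correctly.
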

\begin{proof}
This is an immediate consequence of Lemma~\ref{lem:prob} applied
iteratively.
\end{proof}

Now we are ready to prove Theorem~\ref{thm:main}(c).
First we state the enumeration theorem from~\cite{MW}, extended with
the help of Lemma~\ref{lem:prob}.
Note that, despite this theorem and Lemma~\ref{lem:silver} having considerable
overlap, neither of them implies the other.

\begin{thm}\label{thm:MW}
 Assume $m\le n$, $\lambda_h>0$, $\lambda_d\ge 0$
 and $\lambda_h\lambda_d\, m^{1/2}n=o(1)$.
 Let $D$ be any $(m,n,\lambda_d)$-semiregular graph.
 Then the number of $(m,n,\lambda_h)$-semiregular graphs edge-disjoint from~$D$ is
\begin{align*}
   \frac{(\lambda_h mn)!}{(\lambda_h m)!^n(\lambda_h n)!^m}
   &\exp\( -\dfrac12 (\lambda_h m-1)(\lambda_h n-1) 
     -\dfrac16\lambda_h^3mn \\[-1ex]
    &{\qquad\quad} -\lambda_h\lambda_d\,mn + O(\lambda_h\lambda_d\, m^{1/2}n)\).
\end{align*}
\end{thm}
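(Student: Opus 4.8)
The plan is to read the formula off as a product of two independently understood factors: the number $R_{\lambda_h}(m,n)$ of all $(m,n,\lambda_h)$-semiregular graphs, supplied by the sparse enumeration of~\cite{MW}, and the probability that a uniformly random such graph is edge-disjoint from $D$, which is exactly what Lemma~\ref{lem:prob} computes. So the whole theorem is a short corollary of Lemma~\ref{lem:prob} together with~\cite{MW}; the substantive switching work has already been carried out.

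First I would dispose of the degenerate case. If $\lambda_d=0$ then $D$ is empty, every $(m,n,\lambda_h)$-semiregular graph is edge-disjoint from it, and the statement is just the enumeration of~\cite{MW}; so assume $\lambda_d>0$. Then $D$ is nonempty, hence $\lambda_d\ge 1/m$, and since $\lambda_d^2\lambda_h^2mn^2=(\lambda_d\lambda_h m^{1/2}n)^2=o(1)$ the hypotheses~\eqref{eq:assume} hold, so Lemma~\ref{lem:assume} applies; in particular it gives $\lambda_h=o(m^{1/2}n^{-1})$, which forces both degrees $\lambda_h n$ and $\lambda_h m$ to be $o(m^{1/2})$, well inside the sparse regime covered by~\cite{MW}. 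A routine rearrangement (expanding $-\tfrac12(\lambda_h m-1)(\lambda_h n-1)=-\tfrac12\lambda_h^2mn+\tfrac12\lambda_h(m+n)-\tfrac12$) shows that the exponent of the target formula, minus the $-\lambda_d\lambda_h mn$ term, is precisely the exponent produced by~\cite{MW}.

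Next I would combine the two facts. Write $\mathcal H$ for the set of all labelled $(m,n,\lambda_h)$-semiregular graphs and $N$ for the number of them edge-disjoint from $D$. For a fixed $H\in\mathcal H$, a uniformly random relabelling $\sigma(H)$ is uniformly distributed over the relabelling class of $H$, so by Lemma~\ref{lem:prob} the fraction of that class which is edge-disjoint from $D$ equals $\exp\(-\lambda_d\lambda_h mn+O(\lambda_d\lambda_h m^{1/2}n)\)$, with the $O(\cdot)$ uniform over $H$ since it depends only on $m,n,\lambda_d,\lambda_h$. Summing over the relabelling classes that partition $\mathcal H$ yields $N=|\mathcal H|\exp\(-\lambda_d\lambda_h mn+O(\lambda_d\lambda_h m^{1/2}n)\)$, and since $|\mathcal H|=R_{\lambda_h}(m,n)$, substituting the value of $R_{\lambda_h}(m,n)$ from~\cite{MW} and merging the two exponentials gives the claimed expression.

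The only points requiring care are bookkeeping rather than ideas. One is the verification that $\lambda_h$ lies in the range handled by~\cite{MW}, which as noted follows from $\lambda_d\ge 1/m$ once $\lambda_d>0$. The other, and the closest thing to an obstacle, is checking that the error term inherited from~\cite{MW} is absorbed into $O(\lambda_d\lambda_h m^{1/2}n)$: this needs the explicit (not merely $o(1)$) error bound of~\cite{MW}, which under $\lambda_h=o(m^{1/2}n^{-1})$ and $m\le n$ is of strictly smaller order than $\lambda_d\lambda_h m^{1/2}n$. Once that is in hand, the proof is complete.
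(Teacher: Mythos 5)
Your proposal is correct and is exactly the route the paper takes: Theorem~\ref{thm:MW} is presented there as ``the enumeration theorem from~\cite{MW}, extended with the help of Lemma~\ref{lem:prob}'', i.e.\ the count of all $(m,n,\lambda_h)$-semiregular graphs from~\cite{MW} multiplied by the disjointness probability from Lemma~\ref{lem:prob}, summed over relabelling classes just as you describe. Your bookkeeping checks (that $\lambda_h=o(m^{1/2}n^{-1})$ with $m\le n$ places $\lambda_h$ in the range $o\((mn)^{-1/4}\)$ of~\cite{MW}, and that the explicit error of~\cite{MW} is absorbed into $O(\lambda_h\lambda_d\,m^{1/2}n)$ using $\lambda_d\ge 1/m$) are the right ones and go through.
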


\begin{thm}\label{thm:coloured}
  Let $\lambda_0,\lambda_1,\ldots,\lambda_k>0$ be such that $k\ge 2$ and
  \red{$\sum_{i=0}^k \lambda_i=1$}.  
  Assume that $m \le n$ and $m^{1/2}n\sum_{1\le i<j\le k} \lambda_i\lambda_j=o(1)$.
 Then
 \[
  R(m,n;\lambda_0,\ldots,\lambda_k) =
    R'(m,n;\lambda_0,\ldots,\lambda_k)
     \biggl(1 + O\Bigl(m^{1/2}n\sum_{1\le i<j\le k} \lambda_i\lambda_j \Bigr)\biggr),
\]
where
 $R'(m,n;\lambda_0,\ldots,\lambda_k)$ is defined in Conjecture~\ref{conj:main}.
\end{thm}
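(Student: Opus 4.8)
Here is how I would attack Theorem~\ref{thm:coloured}.

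The plan is to count $R(m,n;\lambda_0,\dots,\lambda_k)$ by the same iteration used in the proof of Theorem~\ref{thm:silver}, but with Theorem~\ref{thm:MW} in place of Lemma~\ref{lem:silver}, and then to match the answer against the expansion of $R'$ provided by Lemma~\ref{lem:Rapprox}. Set $\lambda=\sum_{i=1}^k\lambda_i$, so $\lambda_0=1-\lambda$, and write $\hat\delta:=m^{1/2}n\sum_{1\le i<j\le k}\lambda_i\lambda_j=o(1)$. Fix any order for the $k$ small factors, choose an $(m,n,\lambda_1)$-semiregular graph $D_1$, then an $(m,n,\lambda_2)$-semiregular graph $D_2$ edge-disjoint from $D_1$, and so on; the graph $K_{m,n}\setminus(D_1\cup\dots\cup D_k)$ is then automatically $(m,n,\lambda_0)$-semiregular, so every partition arises exactly once. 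With $\Lambda_{j-1}=\lambda_1+\dots+\lambda_{j-1}$, at step $j$ we count $(m,n,\lambda_j)$-semiregular graphs edge-disjoint from a fixed $(m,n,\Lambda_{j-1})$-semiregular graph, and Theorem~\ref{thm:MW} applies since $\lambda_j\Lambda_{j-1}m^{1/2}n=m^{1/2}n\sum_{i<j}\lambda_i\lambda_j\le\hat\delta$. Writing the argument of the exponential in Theorem~\ref{thm:MW} as $A(\lambda_d,\lambda_h)+O(\delta(\lambda_d,\lambda_h))$, with $A(\lambda_d,\lambda_h)=-\tfrac12(\lambda_hm-1)(\lambda_hn-1)-\tfrac16\lambda_h^3mn-\lambda_h\lambda_dmn$ and $\delta(\lambda_d,\lambda_h)=\lambda_h\lambda_dm^{1/2}n$, and multiplying the $k$ counts exactly as in the proof of Theorem~\ref{thm:silver}, one gets
\[
   R(m,n;\lambda_0,\dots,\lambda_k)=\biggl(\prod_{j=1}^k\frac{(\lambda_j mn)!}{(\lambda_j m)!^n(\lambda_j n)!^m}\biggr)\exp\bigl(\hat A+O(\hat\delta)\bigr),
\]
with $\hat A=\sum_{j=1}^kA(\Lambda_{j-1},\lambda_j)$. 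Since $\sum_j\lambda_j\Lambda_{j-1}=\sum_{i<j}\lambda_i\lambda_j=\tfrac12(\lambda^2-\sum_j\lambda_j^2)$, this telescopes to
\[
   \hat A=-\tfrac12 k+\tfrac12\lambda(m+n)-\tfrac12\lambda^2mn-\tfrac16mn\sum_{j=1}^k\lambda_j^3 .
\]

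To expand $R'$ I would apply Lemma~\ref{lem:Rapprox}, which first requires $\lambda=O(m^{-1/4}n^{-1/4})$, and this must be extracted from the hypothesis. The decisive observation is the elementary bound $\sum_{1\le i<j\le k}\lambda_i\lambda_j\ge\lambda/(2m)$: from $2\sum_{i<j}\lambda_i\lambda_j=\lambda^2-\sum_j\lambda_j^2\ge\lambda(\lambda-\max_j\lambda_j)$ and the fact that $k\ge2$ and every $\lambda_i\ge1/m$ force $\lambda-\max_j\lambda_j\ge1/m$. Hence $\hat\delta\ge n\lambda/(2m^{1/2})\ge m^{1/2}\lambda/2$, so $\lambda=o(m^{-1/2})$; moreover $\lambda=o(m^{1/2}/n)$, and since $m\le n$ gives $m^{1/2}/n\le(mn)^{-1/4}$, we obtain $\lambda=o((mn)^{-1/4})$. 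So Lemma~\ref{lem:Rapprox} applies and \eqref{eq:Rp4} yields
\[
   R'(m,n;\lambda_0,\dots,\lambda_k)=\biggl(\prod_{j=1}^k\frac{(\lambda_j mn)!}{(\lambda_j m)!^n(\lambda_j n)!^m}\biggr)\exp\bigl(\varDelta_1(m,n,\lambda)+O(\lambda^4mn)\bigr).
\]

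It then remains to verify that $\hat A-\varDelta_1(m,n,\lambda)$ and $\lambda^4mn$ are both $O(\hat\delta)$, which gives $R/R'=\exp(O(\hat\delta))=1+O(\hat\delta)$. Subtracting the two expansions, the leading terms $-\tfrac12 k$, $\tfrac12\lambda(m+n)$ and $-\tfrac12\lambda^2mn$ cancel, leaving
\[
   \hat A-\varDelta_1(m,n,\lambda)=\tfrac16mn\Bigl(\lambda^3-\sum_j\lambda_j^3\Bigr)+\tfrac12\lambda-\tfrac{k}{4m}-\tfrac14\lambda^2(m+n)+\tfrac1{12}\lambda\Bigl(\tfrac mn+\tfrac nm\Bigr).
\]
Expanding the cube gives $\lambda^3-\sum_j\lambda_j^3\le4\lambda\sum_{i<j}\lambda_i\lambda_j$, so the first term is at most $\tfrac23(m\lambda)\cdot n\sum_{i<j}\lambda_i\lambda_j=o(m^{1/2})\cdot(\hat\delta/m^{1/2})=o(\hat\delta)$. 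For the other terms, and for $\lambda^4mn$, I would divide by $\hat\delta\ge n\lambda/(2m^{1/2})$ and use $\lambda=o(m^{-1/2})$, $k\le\lambda m$ and $m\le n$: for instance $\lambda^4mn/\hat\delta\le2\lambda^3m^{3/2}=o(1)$, $\tfrac14\lambda^2(m+n)/\hat\delta\le\lambda m^{1/2}=o(1)$, $\tfrac1{12}\lambda(\tfrac mn+\tfrac nm)/\hat\delta\le\tfrac13m^{-1/2}=o(1)$, and $\tfrac12\lambda$ together with $\tfrac{k}{4m}\le\tfrac14\lambda$ are handled the same way. Hence every term is $o(\hat\delta)$ and the theorem follows.

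The iteration and the algebraic telescoping are routine; \textbf{the main obstacle is the error accounting}, and the crux is the inequality $\sum_{1\le i<j\le k}\lambda_i\lambda_j\ge\lambda/(2m)$, which simultaneously delivers $\lambda=o(m^{-1/2})$, the lower bound $\hat\delta\ge n\lambda/(2m^{1/2})$ against which every residual term is measured, and the hypothesis $\lambda=O((mn)^{-1/4})$ needed to invoke Lemma~\ref{lem:Rapprox}. A secondary point is that one must use the sharper estimate~\eqref{eq:Rp4} rather than~\eqref{eq:Rp3}: replacing $\varDelta_1$ by $\varDelta_2$ would leave the uncontrolled term $\tfrac16mn\sum_j\lambda_j^3$ in the difference, whereas with $\varDelta_1$ only $\tfrac16mn(\lambda^3-\sum_j\lambda_j^3)$ appears, which the inequality above shows is $O(\hat\delta)$.
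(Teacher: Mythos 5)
Your proposal is correct and follows essentially the same route as the paper's proof: iterate Theorem~\ref{thm:MW} over the $k$ sparse factors, telescope to get $\hat A=-\frac12k+\frac12\lambda(m+n)-\frac12\lambda^2mn-\frac16mn\sum_j\lambda_j^3$, and compare with~\eqref{eq:Rp4} of Lemma~\ref{lem:Rapprox}, with the key lower bound $\sum_{i<j}\lambda_i\lambda_j\ge\lambda/(2m)$ (which the paper obtains as $\varLambda\ge\frac{k-1}{2m}\lambda$) driving all the error estimates. Your identification of the cubic cross-term $\frac16mn(\lambda^3-\sum_j\lambda_j^3)$ as the reason \eqref{eq:Rp4} rather than \eqref{eq:Rp3} is needed matches the paper exactly.
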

\begin{proof}
The proof follows the same line as Theorem~\ref{thm:silver}.
Define $\lambda=\sum_{i=1}^k\lambda_i$ and
$\varLambda=\sum_{1\le i<j\le k} \lambda_i\lambda_j$. Write
   the argument of the exponential in Theorem~\ref{thm:MW} as
   $A'(\lambda_d,\lambda_h)+O(\delta'(\lambda_d,\lambda_h))$.
   Then, as before,
   \begin{align*}
       R(m,n;&\lambda_0,\ldots,\lambda_k) =
         \exp\( \check A + O(\check\delta)\)
         \prod_{i=1}^k \frac{ (\lambda_i mn)!}{(\lambda_i m)!^n\,(\lambda_i n)!^m},
             \qquad\text{where} \\[-1ex]
        \check A&= A'(0,\lambda_1)+A'(\lambda_1,\lambda_2)+A'(\lambda_1+\lambda_2,\lambda_3)
             + \cdots + A'\Bigl({\textstyle\sum_{i=1}^{k-1}}\lambda_i,\lambda_k\Bigr), \\
        \check \delta&= \delta'(0,\lambda_1)+\delta'(\lambda_1,\lambda_2)+
        \delta'(\lambda_1+\lambda_2,\lambda_3)
             + \cdots + \delta'\Bigl({\textstyle\sum_{i=1}^{k-1}}\lambda_i,\lambda_k\Bigr) . 
   \end{align*}
   By routine induction, we find that
   \[
       \check A = -\dfrac12  k + \dfrac12 \lambda(m+n) 
           -\dfrac16 mn\sum_{i=1}^k\lambda_i^3- \dfrac12 \lambda^2mn
       \quad\text{and}\quad \check\delta = m^{1/2}n\varLambda.
   \]
   
   \red{Since $\lambda_i\ge\frac1m$ for $1\le i\le k$, we have
   $\varLambda=\frac12\sum_{i=1}^k\lambda_i\sum_{j\ne i}\lambda_j
   \ge \frac{k-1}{2m}\lambda\ge \frac{\lambda}{2m}$.
   Therefore, the condition $m^{1/2}n\varLambda=o(1)$ implies that
   $m^{-1/2}n\lambda=o(1)$ and $\check\delta=\Omega(m^{-1/2}n\lambda)$.
   Since $\lambda^4 mn = (m^{-1/2}n\lambda)^4 m^3n^{-3}$ and $m\le n$,
   we also have $\lambda^4 mn=O(\check\delta)=o(1)$.
   This allows us to apply~\eqref{eq:Rp4} to estimate $R'(m,n;\lambda_0,\ldots,\lambda_k)$.
   We also have $km^{-1}\le\lambda=O(\check\delta)m^{1/2}n^{-1}=O(\check\delta)$,
   $\lambda^2m\le \lambda^2 n=O(\check\delta^2)mn^{-1}=O(\check\delta)$, and
   $\lambda mn^{-1}\le\lambda nm^{-1}=O(\check\delta^2)m^{-1/2}=O(\check\delta)$,
   which eliminates many terms of~\eqref{eq:Rp4}.
   What remains gives
   \[
     R(m,n;\lambda_0,\ldots,\lambda_k) = R'(m,n;\lambda_0,\ldots,\lambda_k)
     \exp\biggl( \dfrac16 \lambda^3 mn - \dfrac16 mn\sum_{i=1}^k\lambda_i^3
        + O(\check\delta)\biggr).
   \]
   Finally, we have
   \[
     mn\biggl( \lambda^3 - \sum_{i=1}^k\lambda_i^3\biggr)
       = O(mn)\sum_{\ell=1}^k\sum_{i<j}\lambda_i\lambda_j\lambda_\ell
       = O(\lambda mn\varLambda) = O(\check\delta^2)m^{1/2}n^{-1/2} = O(\check\delta),
    \]
   which completes the proof.
   }
\end{proof}

\begin{figure}[ht!]
   \[
      \includegraphics[scale=0.4]{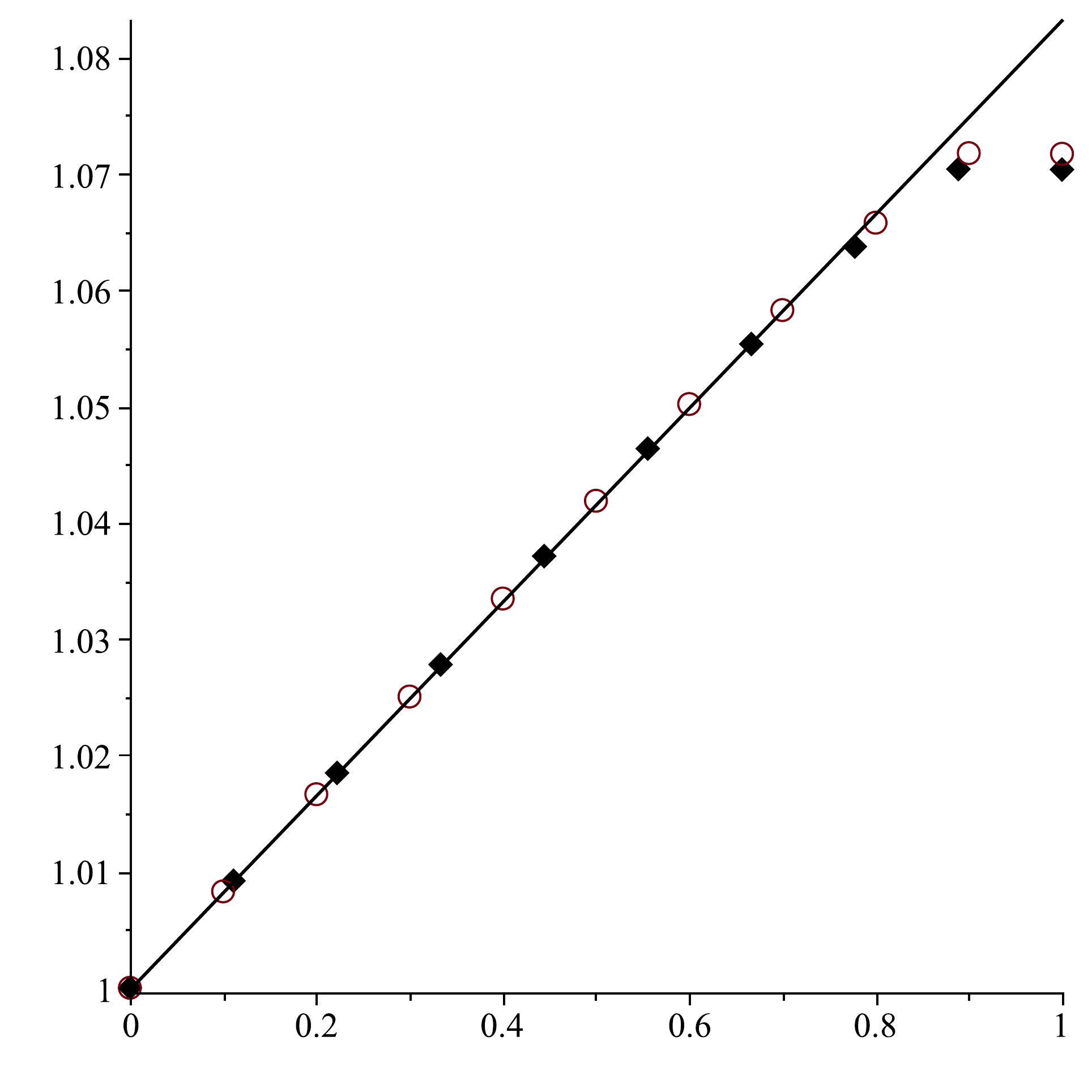}
  \]
      \vspace*{-6ex}
      \caption{Latin rectangles.
   $F(n,k)/R'(n,n;1-\frac kn,\frac1n,\ldots,\frac1n)$ for $n=10$ (diamonds)
   and $n=11$ (circles). The horizontal scale is $x=k/(n-1)$
   and the line is $1+x/12$.\label{fig}}
\end{figure}

\section{The case of Latin rectangles}

The case $m=n, \lambda_1,\ldots,\lambda_k=\frac{1}{n}$
corresponds to choosing an ordered sequence of $k$ perfect matchings
in $K_{n,n}$, which is a $k\times n$ Latin rectangle.

Let $F(n,k)$ be the number $k\times n$ Latin rectangles.
Note that $F(n,n-1)=F(n,n)$; we will use only $F(n,n-1)$.
In~\cite{GM}, Godsil and McKay found the asymptotic value of $F(n,k)$
for $k=o(n^{6/7})$ and conjectured that the same formula holds for any
$k=O(n^{1-\delta})$ with $\delta>0$, namely that 
\[
  F(n,k)
  \sim (n!)^k \biggl( \frac{n!}{(n-k)!\,n^k}\biggr)^{\!n}
     \biggl(1 - \frac{k}{n}\biggr)^{\!-n/2} e^{-k/2}.
\]
The reader can check that this expression is equal to
\red{$R'(n,n; 1-\frac kn,\frac1n,\ldots,\frac1n)$ asymptotically when
$k=o(n)$, where here and in the following $\frac1n$ occurs $k$
times as an argument of~$R'$.}

To illustrate what happens when $k$ is even larger, Figure~\ref{fig}
shows the ratio
$F(n,k)/\allowbreak \red{R'(n,n;1-\frac kn,\frac1n,\ldots,\frac1n)}$
for $n=10,11$, as a function
of $k/(n-1)$~\cite{LS11}.                                                                                                                                                                                                                                                                                                                                                                                                                                                                                                                                                                                                                                                                                                                                                                                                                                                                                                                                                                                                                                                                                                                                                                                                                                                                                                                                                                                                                                                                                                                                                                                                                                                                                                                                                                                                                                                                                                                                                                                                                                                                                                                                                                                                                                                                                                          
Experiment suggests that $F(n,x(n-1))/\red{R'(n,n;1-\frac kn,\frac1n,\ldots,\frac1n)}$
converges to a continuous function $f(x)$ as $n\to\infty$ with
$x$ fixed. Conjecture~\ref{conj:main} in this case corresponds
to $f(0)=1$.

\section{Two factors of high degree}

In this section, we consider the case where $\lambda_0$
and $\lambda_1$ are approximately constant.
We will use a constant $\eps>0$ that must be sufficiently small.
Suppose the following conditions hold.
\begin{equation}\label{eq:dense}
\begin{aligned}
 (1-2\lambda_1)^2\biggl(1+\frac{5m}{6n}+\frac{5n}{6m}\biggr)
                     &  \le(4-\eps)\lambda_1(1-\lambda_1)\log n \\
m\le n &= o\(\lambda_1(1-\lambda_1) m^{1+\eps}\).
\end{aligned}
\end{equation}

Let $D$ be an $(m,n,\hat\lambda)$-semiregular graph where
$0<\hat\lambda\le n^{-1+\eps}$.
Greenhill and McKay~\cite{GMX} proved that the number of
$(m,n,\lambda_1)$-semiregular graphs is given by Theorem~\ref{thm:reg},
\red{and the} probability that a uniformly random
$(m,n,\lambda_1)$-semiregular graph is edge-disjoint from~$D$
is asymptotically
\begin{equation}\label{eq:l01}
    P(m,n,\lambda_1,\hat \lambda) = (1-\lambda_1)^{\hat\lambda mn}  \exp\biggl(
      -\frac{\lambda_1\hat\lambda(\hat\lambda mn-m-n)}
              {2(1-\lambda_1)} \biggr).
\end{equation}

\begin{thm}\label{thm:ranx}
Suppose conditions~\eqref{eq:dense} hold and suppose
$\hat\lambda=\lambda_2+\cdots+\lambda_k=O(n^{-1+\eps})$
with sufficiently small constant $\eps>0$.
Then, as $n\to\infty$,
\[
   R(m,n; \lambda_0,\ldots,\lambda_k) \sim R'(m,n; \lambda_0,\ldots,\lambda_k).
\]
\end{thm}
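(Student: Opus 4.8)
The plan is to build an $(m,n;\lambda_0,\dots,\lambda_k)$-partition in two stages: first lay down the $k-1$ sparse factors of densities $\lambda_2,\dots,\lambda_k$, obtaining a forbidden graph $D$ of density $\hat\lambda:=\lambda_2+\dots+\lambda_k$, and then choose the dense factor of density $\lambda_1$ avoiding $D$, after which the $\lambda_0$-factor is forced. The first stage is a small-density problem governed by Theorem~\ref{thm:silver}; the second is the forbidden-edge enumeration of Greenhill and McKay summarised in~\eqref{eq:l01}. It then remains only to check that the product of the resulting asymptotic formulae equals $R'(m,n;\lambda_0,\dots,\lambda_k)$.

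To carry this out I would first note that the case $k=1$ ($\hat\lambda=0$) is condition~2 of Theorem~\ref{thm:reg}, so assume $k\ge 2$ and put $\lambda_0=1-\lambda_1-\hat\lambda$. An $(m,n;\lambda_0,\lambda_1,\dots,\lambda_k)$-partition is the same datum as an ordered tuple $(D_2,\dots,D_k)$ of pairwise edge-disjoint semiregular factors of densities $\lambda_2,\dots,\lambda_k$, together with an $(m,n,\lambda_1)$-semiregular graph $H$ edge-disjoint from $D:=D_2\cup\dots\cup D_k$; the $\lambda_0$-factor is then $K_{m,n}\setminus(D\cup H)$. The number of such tuples is exactly $R(m,n;1-\hat\lambda,\lambda_2,\dots,\lambda_k)$, since $K_{m,n}\setminus D$ is a forced $(m,n,1-\hat\lambda)$-semiregular factor. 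Each $D$ arising here is an $(m,n,\hat\lambda)$-semiregular graph with $\hat\lambda=O(n^{-1+\eps})$, so by~\cite{GMX} (uniformly over all such $D$) the number of admissible $H$ is $R_{\lambda_1}(m,n)\,P(m,n,\lambda_1,\hat\lambda)(1+o(1))$. Summing over $(D_2,\dots,D_k)$,
\[
  R(m,n;\lambda_0,\dots,\lambda_k)=R(m,n;1-\hat\lambda,\lambda_2,\dots,\lambda_k)\cdot R_{\lambda_1}(m,n)\cdot P(m,n,\lambda_1,\hat\lambda)\cdot(1+o(1)).
\]

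Next I would simplify the three factors. Conditions~\eqref{eq:dense} imply $m\le n=o(m^{1+\eps})$ and $\lambda_1(1-\lambda_1)=\Omega(1/\log n)$, whence $\hat\lambda=O(n^{-1+\eps})$ and $\hat\lambda=o(m^{1/3}/n)$ for sufficiently small $\eps$; hence Theorem~\ref{thm:silver} gives $R(m,n;1-\hat\lambda,\lambda_2,\dots,\lambda_k)\sim R'(m,n;1-\hat\lambda,\lambda_2,\dots,\lambda_k)$, and since~\eqref{eq:dense} is condition~2 of Theorem~\ref{thm:reg} for $(m,n,\lambda_1)$, also $R_{\lambda_1}(m,n)\sim R'(m,n;1-\lambda_1,\lambda_1)$. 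The theorem then follows once one verifies the purely formal identity
\[
  R'(m,n;1-\hat\lambda,\lambda_2,\dots,\lambda_k)\cdot R'(m,n;1-\lambda_1,\lambda_1)\cdot P(m,n,\lambda_1,\hat\lambda)\sim R'(m,n;\lambda_0,\lambda_1,\dots,\lambda_k).
\]
Splitting each multinomial coefficient of $R'(m,n;\lambda_0,\dots,\lambda_k)$ through the value $1-\hat\lambda=\lambda_0+\lambda_1$ — for $N\in\{m,n,mn\}$, $\binom{N}{\lambda_0 N,\dots,\lambda_k N}=\binom{N}{(1-\hat\lambda)N,\lambda_2 N,\dots,\lambda_k N}\binom{(1-\hat\lambda)N}{\lambda_1 N}$ — and noting that the powers of $1-1/m$ balance as $k=(k-1)+1$, this identity reduces to
\[
  \exp\bigl(m\,g(n)+n\,g(m)-g(mn)\bigr)\sim P(m,n,\lambda_1,\hat\lambda),\qquad g(N):=\log\frac{\bigl((1-\lambda_1)N\bigr)_{\hat\lambda N}}{(N)_{\hat\lambda N}}.
\]

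Finally I would estimate $g(N)$ by applying the falling-factorial expansion~\eqref{eq:fff} (from the proof of Lemma~\ref{lem:Rapprox}) both to $(N)_{\hat\lambda N}$ and to $\bigl((1-\lambda_1)N\bigr)_{\hat\lambda N}$, the latter read as a falling factorial of ``density'' $\hat\lambda/(1-\lambda_1)$, which the hypotheses keep $O(N^{-1/4})$ for $N\in\{m,n,mn\}$. The surviving main terms are
\[
  g(N)=\hat\lambda N\log(1-\lambda_1)-\frac{\lambda_1\hat\lambda^2 N}{2(1-\lambda_1)}+\frac{\lambda_1\hat\lambda}{2(1-\lambda_1)}+\text{(smaller terms)},
\]
and $m\,g(n)+n\,g(m)-g(mn)$ turns these three into exactly $\log P(m,n,\lambda_1,\hat\lambda)$, namely $\hat\lambda mn\log(1-\lambda_1)-\lambda_1\hat\lambda^2 mn/(2(1-\lambda_1))+\lambda_1\hat\lambda(m+n)/(2(1-\lambda_1))$. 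The main obstacle will be the bookkeeping of the ``smaller terms'': one must justify applying~\eqref{eq:fff} with the inflated density $\hat\lambda/(1-\lambda_1)$ (this is where $n=o(\lambda_1(1-\lambda_1)m^{1+\eps})$ and the smallness of $\eps$ enter) and then confirm that every sub-leading contribution — of orders $\hat\lambda^3 N$, $\hat\lambda^2$, $\hat\lambda/N$ and $\hat\lambda^4 N$, each possibly carrying powers $1/(1-\lambda_1)^j=O(\log^j n)$ — combines to $o(1)$, using $\hat\lambda=O(n^{-1+\eps})$, $m\le n=o(m^{1+\eps})$ and $n/m=o(m^{\eps})$. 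A reassuring check is that the required identity is exactly the statement that the $k=1$ case of the $R'$-formula is consistent with the forbidden-edge refinement~\eqref{eq:l01} of~\cite{GMX}, so only this error analysis is genuinely at stake.
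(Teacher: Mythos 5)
Your proposal is correct and follows essentially the same route as the paper: the same two-stage decomposition (sparse factors first via Theorem~\ref{thm:silver}, then the dense $\lambda_1$-factor via Theorem~\ref{thm:reg} part~2 together with the forbidden-edge probability~\eqref{eq:l01}), followed by the same Stirling-type verification that the product of the two $R'$ formulae times $P(m,n,\lambda_1,\hat\lambda)$ matches $R'(m,n;\lambda_0,\ldots,\lambda_k)$. The only difference is cosmetic: you organise the final computation through the falling-factorial expansion~\eqref{eq:fff} applied to $((1-\lambda_1)N)_{\hat\lambda N}/(N)_{\hat\lambda N}$, whereas the paper applies Stirling's formula directly to the factorial ratio; both reduce to the same cancellation.
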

\begin{proof}
\red{Our approach is to first construct an arbitrary graph $D$ of density
$\hat\lambda$ partitioned into factors of density $\lambda_2,\ldots,\lambda_k$.
Then Theorem~\ref{thm:reg} together with~\eqref{eq:l01} tells us the number
of graphs $G$ of density $\lambda_1$ disjoint from~$D$. The complement
of $G\cup D$ is then the factor of density~$\lambda_0$.}

\red{The second part of Condition~\eqref{eq:dense} implies that
$n^{-1+\eps}\le m^{1/3}/n$ for sufficiently small $\eps>0$.
Therefore we can apply Theorem~\ref{thm:silver} to deduce that
the possibilities for~$D$ are}
\[
R(m,n; \lambda_0+\lambda_1,\lambda_2,\ldots,\lambda_k)\sim
    R'(m,n; \lambda_0+\lambda_1,\lambda_2,\ldots,\lambda_k).
\]
\red{In addition, $R(m,n;1-\lambda_1,\lambda_1)\sim 
R'(m,n;1-\lambda_1,\lambda_1)$ by Theorem~\ref{thm:reg} part~2.}
Therefore, 
\[
    R(m,n; \lambda_0,\ldots,\lambda_k)\sim P(m,n,\lambda_1,\hat\lambda)
             R'(m,n;1-\lambda_1,\lambda_1)
             R'(m,n;\lambda_0+\lambda_1,\lambda_2,\ldots,\lambda_k).
\]
Then we have
\[
    \frac{R(m,n; \lambda_0,\ldots,\lambda_k)}{R'(m,n; \lambda_0,\ldots,\lambda_k)}
    \sim
    P(m,n,\lambda_1,\hat\lambda)
     \frac{
              \Bigl( \frac{m!\, ((1-\lambda_1-\hat\lambda)m)!}
                      {((1-\lambda_1)m)!\, ((1-\hat\lambda)m)!} \Bigr)^{\!n}
              \Bigl( \frac{n!\, ((1-\lambda_1-\hat\lambda)n)!}
                      {((1-\lambda_1)n)!\, ((1-\hat\lambda)n)!} \Bigr)^{\!m}
            }  
            {
               \Bigl( \frac{(mn)!\, ((1-\lambda_1-\hat\lambda)mn)}
                      {((1-\lambda_1)mn)!\, ((1-\hat\lambda)mn)!} \Bigr)
            }.
\]
Define $g(N)$ by $N! =\sqrt{2\pi}\, N^{N+1/2} e^{-N+g(N)}$ and
\[
\bar g(N)= g(N)+g((1{-}\hat\lambda{-}\lambda_1)N)-g((1{-}\hat\lambda)N)-g((1{-}\lambda_1)N.
\]
Then, \red{using~\eqref{eq:l01},}
\begin{align*}
  &\frac{R(m,n; \lambda_0,\ldots,\lambda_k)}{R'(m,n; \lambda_0,\ldots,\lambda_k)} \\
  &{\qquad}\sim
  \( 1 - \hat\lambda\)^{-(1-\hat\lambda)mn-(m+n-1)/2}
  \biggl(1 - \frac{\hat\lambda}{1-\lambda_1}\biggr)^{\!(1-\hat\lambda-\lambda_1)mn+(m+n-1)/2} \\
  &{\qquad\qquad\qquad}\times \exp\biggl( -\frac{\lambda_1\hat\lambda(\hat\lambda mn-m-n)}
              {2(1-\lambda_1)} +n\bar g(m)+m\bar g(n)-\bar g(mn) \biggr).
\end{align*}
Now we can apply the estimates $1-x=e^{-x-x^2/2+O(x^3)}$ and
$g(N)=\frac1{12N} + O(N^{-3})$ to show that the above quantity is $e^{o(1)}$.
This completes the proof.
\end{proof}

\section{The highly oblong case}

Suppose $2\le m=O(1)$ and $1\le k\le m-1$.  In that case we can prove
Conjecture~\ref{conj:main} by application of the Central Limit Theorem,
without requiring the condition $k=o(m)$.

We will consider the partition of $K_{m,n}$ into $k+1$ semiregular graphs
as an edge-colouring with colours $0,1,\ldots,k$, where colour $c$
gives a semiregular subgraph of density $\lambda_c$ for $0\le c\le k$.
Label $V_1$ as $u_1,\ldots,u_m$ and consider one vertex $v\in V_2$.
For $0\le c\le k$, $v$ must be adjacent to $\lambda_c m$ vertices of~$V_1$
by \red{edges} of colour~$c$;
let us make the choice uniformly at random from the
\[  \binom{m}{\lambda_0 m,\ldots,\lambda_k m} \] 
possibilities.  Define the random variable $X_{i,c}$ to be the indicator
of the event ``$v$~is joined to vertex~$u_i$ by colour~$c$''.
Let $\boldsymbol{X}$ be the $(m-1)k$-dimensional random vector
$(X_{i,c})_{1\le i\le m-1, 1\le c\le k}$.  Note that we are omitting
$i=m$ and $c=0$ since those indicators can be determined from
the others (this avoids degeneracy in the following).
\red{Let $\boldsymbol{X}^{(n)}$ denote the sum of $n$ independent copies
of $\boldsymbol{X}$, corresponding to a copy of $\boldsymbol{X}$ for
each vertex in~$V_2$.
For each vertex in $V_2$ to have the correct number $\lambda_c m$
of incident edges of each colour~$c$, $\boldsymbol{X}^{(n)}$ must equal its mean.
That is,
\[
   R(m,n;\lambda_0,\ldots,\lambda_k) =
      \binom{m}{\lambda_0 m,\ldots,\lambda_k m}^{\! n}  \,
       \mathrm{Prob}( \boldsymbol{X}^{(n)}=\E\boldsymbol{X}^{(n)}  ).
\]
We have $\E X_{i,c}=\lambda_c$ for every $i,c$ and the
following expectations of products.
\[
    \E(X_{i,c} \,X_{i',c'}) =
       \begin{cases}
           \lambda_c, & \text{if $i=i', c=c'$}; \\
           0, & \text{if $i=i', c\ne c'$}; \\
            \frac{\lambda_c(\lambda_c m-1)}{m-1},  & \text{if $i\ne i', c=c'$}; \\
           \frac{m}{m-1}\lambda_c\lambda_{c'}, & \text{if $i\ne i', c\ne c'$}.
     \end{cases}
\]
Using $\mathrm{Cov}(X_{i,c}, X_{i',c'}) 
  =\E(X_{i,c}\, X_{i',c'}) -\E X_{i,c}\,\E X_{i',c'}$ this gives}
\[
    \mathrm{Cov}(X_{i,c}, X_{i',c'}) =
       \begin{cases}
           \lambda_c(1-\lambda_c), & \text{if $i=i', c=c'$}; \\
           -\lambda_c\lambda_{c'}, & \text{if $i=i', c\ne c'$}; \\
           -\frac{\lambda_c(1-\lambda_c)}{m-1}, & \text{if $i\ne i', c=c'$}; \\
           \frac{\lambda_c\lambda_{c'}}{m-1}, & \text{if $i\ne i', c\ne c'$}.
     \end{cases}
\]
Let $\varSigma$ be the covariance matrix of $\boldsymbol{X}$,
labelled in the order $(1,1),(1,2),\ldots,(m-1,k)$.

By~\cite[Thm.~1]{CLT}, $\boldsymbol{X}^{(n)}$
satisfies a local Central Limit Theorem as $n\to\infty$.  In particular,
\red{since the covariance matrix of $\boldsymbol{X}^{(n)}$ is $n\varSigma$,}
\[
     \mathrm{Prob}( \boldsymbol{X}^{(n)}=\E\boldsymbol{X}^{(n)}  )
     \sim \frac{1}{(2\pi)^{k(m-1)/2} n^{k(m-1)/2} \abs{\varSigma}^{1/2}}.
\]
To find the determinant $\abs\varSigma$,
it helps to notice that $\varSigma$ is a tensor product
$B\otimes C$.  Here $B$ is a $k\times k$ matrix with
$B_{ii}=\lambda_i(1-\lambda_i)$ for all~$i$ and
$B_{ij}=-\lambda_i\lambda_j$ for $i\ne j$; while $C$ is an
$(m-1)\times(m-1)$ matrix with 1 on the diagonal and 
$-\frac1{m-1}$ off the diagonal.  Both $B$ and $C$ are rank-1 modifications
of diagonal matrices, and the matrix determinant lemma gives
$\abs{B}=\prod_{i=0}^k\lambda_i$ and $\abs{C}=m^{m-2}/(m-1)^{m-1}$.
Therefore,
\[
    \abs{\varSigma} = \abs{B}^{m-1}\abs{C}^k
       = m^{-k} (1-1/m)^{-k(m-1)} \biggl( \,\prod_{i=0}^k \lambda_i\biggr)^{\!m-1}.
\]
To complete the proof of Theorem~\ref{thm:main}(e),
apply \red{$N! =  \sqrt{2\pi}\,N^{N+1/2}e^{-N+O(1/N)}$} to find
\[
        \frac{\displaystyle\binom{n}{\lambda_0 n,\ldots,\lambda_k n}^{\!m}}
              {\displaystyle\binom{mn}{\lambda_0 mn,\ldots,\lambda_k mn}}
        \sim (2\pi n)^{-k(m-1)/2} m^{k/2} \biggl(\,\prod_{i=0}^k \lambda_i\biggr)^{\!-(m-1)/2}.
\]

\section{Concluding remarks}\label{s:conclusion}

We have proposed an asymptotic formula for the number of ways
to partition a complete bipartite graph into spanning semiregular
regular subgraphs and proved it in several cases.
The analytic method described in~\cite{mother} will be sufficient
to test the conjecture when there are several factors of high density.
This will be the topic of a future paper.
A similar investigation for regular graphs that are not necessarily
bipartite  appears in~\cite{RegPart}.

The authors declare that they have no conflict of interest.

\nicebreak

\end{document}